\begin{document}
\title{Exploring the Steiner-Soddy Porism}
\author{Ronaldo Garcia\inst{1}\orcidID{0000-0001-8876-6956} \and
Liliana Gheorghe\inst{2}\orcidID{0000-0003-3922-0177} \and
Dan Reznik\inst{3}\orcidID{0000-0002-0542-6014}}

\authorrunning{R. Garcia et al.}
\institute{Federal Univ. of Goiás, Goiânia, Brazil\\
\email{ragarcia@ufg.br}\\
\and
Federal Univ. of Pernambuco, Recife, Brazil\\
\email{liliana@dmat.ufpe.br}\\
\and
Data Science Consulting, Rio de Janeiro, Brazil\\
\email{dreznik@gmail.com}}
\maketitle              
\vspace{-0.75cm}
\begin{abstract}
We explore properties and loci of a Poncelet family of polygons -- called here Steiner-Soddy -- whose vertices are centers of circles in the Steiner porism, including conserved quantities, loci, and its relationship to other Poncelet families.
\keywords{triangle\and inversive\and pedal polygon\and  porism\and Poncelet \and invariant}
\end{abstract}
\vspace{-0.75cm}

\section{Introduction}
A {\em Steiner chain} is a set of pairwise-tangent circles, all of whom are tangent to a pair of disjoint circles, called the inner and outer ``Soddy'' circles, see \cref{fig:intro-n5}(left). The chain is poristic since it is the inversive image of a set of identical, mutually-tangent circles, centered at the vertices of a regular polygon, see \cref{fig:intro-n5}(right).
In this article, we explore the family of ``Steiner-Soddy'' polygons whose vertices are the centers of circles in a Steiner porism. 
\vspace{-0.33cm}
\subsubsection*{Main Results.}
Let $\P$ denote polygons in the Steiner-Soddy family. Let $N$ denote the number of its polygonal sides.

\begin{compactitem}
    \item The $\P$ are conic-inscribed and circumscribe a circle, and are therefore a Poncelet porism \cite{dragovic11}.
    \item The outer conic of the $\P$ has foci on the centers of the inner and Soddy circles of the Steiner chain.
    \item When the center of the caustic is on the circumference of the inner Soddy circle, the family becomes parabola-inscribed and the outer Soddy circle degenerates to a line.
    \item As a corollary to a result proved in \cite{schwartz2020-steiner}, the family conserves the sum of powers of half-angle tangents, up to power $N-1$.
    \item The locus of the perimeter centroid\footnote{This is the weighted average of the midpoints of sides, where the weights are sidelengths.} is a conic. Recall this is not guaranteed for a generic Poncelet family \cite{sergei2016-com}.
    \end{compactitem}
    
For the case when the $\P$ are triangles ($N=3$) we obtain:

    \begin{compactitem}
        \item The half-tangents of the family are equal to the cotangents of its intouch triangles.
    \item The sum of tangents of half angles is less than, equal, or greater than two if the family is ellipse, parabola, or hyperbola inscribed, respectively.
    \item When the family parabola-inscribed, the locus of the orthocenter is a line.
    \item In the spirit of \cite{odehnal2011-poristic}, we tour loci of some triangle centers over the $\P$, showing some to be stationary, circles, lines, conics, and non-conics.
\end{compactitem}

\begin{figure}
    \centering
    \includegraphics[width=\textwidth]{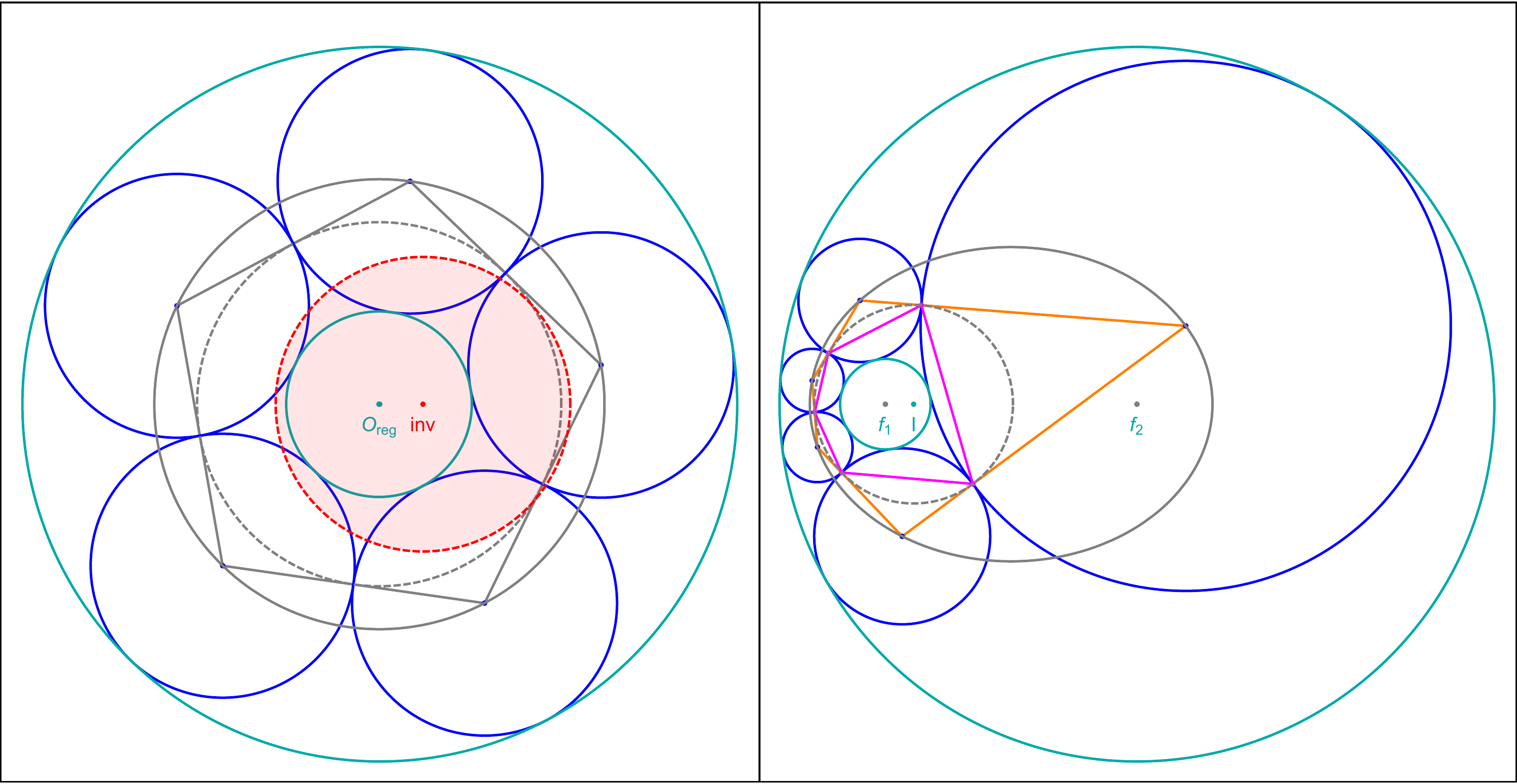}
    \caption{\textbf{Left:} Consider a set of pairwise tangent circles (blue) centered on the vertices of a regular polygon (gray). By symmetry, an inner and an outer ``Soddy'' circle (light blue) can be drawn tangent to all circles. Let the shaded red circle be an inversion circle. 
    \textbf{Right:} a Steiner chain is the inversive image of the setup in the left with respect to said inversion circle. Also shown is the Steiner-Soddy polygon $\P$ (orange) whose vertices are the centers of circles in the chain. A Poncelet porism of $\P$ exists inscribed in a conic (gray) whose foci are the centers of the inversive images of the two original Soddy circles (light blue). The caustic is a circle (dashed gray) centered at $I$. Also shown is the pedal polygon of $\P$ (magenta) with respect to $I$, also Ponceletian.}
    \label{fig:intro-n5}
\end{figure}

\vspace{-0.33cm}
\subsubsection*{Related Work.}
In \cite{schwartz2020-steiner} it is proved that the first $k-1$ moments of curvatures are invariant over Steiner's porism. Loci of vertex, area, and perimeter centroids of a generic Poncelet family are studied in \cite{sergei2016-com}, where it is shown that the first two are always conics. In \cite{yff2007-isoperim} a condition is derived which guarantees that a certain center of a triangle exists iff the sum of half-tangents is less than two; this is equivalent to whether the center of the outer Soddy circle is everted or not \cite{garcia2021-triads}. The Poncelet family which is the polar image of bicentrics was studied in \cite{bellio2021-parabola-inscribed}, and that of harmonic polygons was studied in \cite{roitman2021-bicentric}. Seminal studies of loci of triangle centers over families of Poncelet triangles include \cite{odehnal2011-poristic,skutin2013,zaslavsky2001-poncelet,zaslavsky2003-trajectories}. A theory for the type of locus swept by triangle centers over the confocal Poncelet family is presented in \cite{helman2021-theory}.
\vspace{-0.33cm}
\subsubsection*{Article organization.}
In \cref{sec:all-n} we prove the main properties of the Steiner-Soddy family for all $N$. In \cref{sec:n3} we prove certain results specialized to the $N=3$ case. Loci of triangle centers in the $N=3$ are toured \cref{sec:n3-loci}. We finish with a discussion in \cref{sec:cons} comparing invariants of various Poncelet families. Explicit formulas for some of the objects mentioned herein appear in \cref{app:explicit}.

\section{The Steiner-Soddy Porism}
\label{sec:all-n}
Consider polygons $\P$ with vertices at the centers of circles in a Steiner porism. We call these ``Steiner-Soddy'' polygons (or family). Referring to \cref{fig:intro-n5}, the points of contact between consecutive circles in the chain are concyclic, since these lie on the inversive image of a regular polygons' incircle. Call this circle $\C$ and its center $I$. Let $\H$ denote the circle-inscribed polygon whose vertices are said contact points. $\H$ is known to be {\em harmonic}, i.e., it is circle-inscribed and contains a special point $K$ whose distance to the sides are a constant proportion of the sidelengths \cite{casey1888}. 

It is known that the tangent lines between consecutive pairs of circles in a Steiner chain meet at $I$ \cite[pp. 120,244--245]{wells1991}. Therefore:

\begin{remark}
$\P$ is the pedal polygon of $\H$ with respect to $I$, and therefore the sides of $\P$ are tangent to $\C$.
\end{remark}


The Steiner porism has two fixed ``Soddy'' circles $\S$ and $\S'$ which are tangent to all $N$ circles in chain. Let $\S_{reg}$ and $\S'_{reg}$ denote their inversive pre-images in the regular setup,\cref{fig:intro-n5}(left).

\begin{proposition}
$\P$ is inscribed in a conic $\E$ which is an (i) ellipse, (ii) hyperbola, or (iii) parabola if the inversion center is (i) interior to $\S_{reg}$ or exterior to $\S'_{reg}$, (ii) in the annulus between) $\S_{reg}$ and $\S'_{reg}$, or (iii) on either circle. 

Furthermore, the foci of $\E$ are the centers of the inner and outer Soddy circles of $\S$ and $\S'$. In the case of a parabola, the center of $\S'$ is at infinity and $\S'$ becomes a line parallel to the directrix.
\label{prop:ssf}
\end{proposition}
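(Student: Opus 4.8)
\smallskip
\noindent\emph{Proof plan.} The whole statement is driven by one classical fact, which I would derive rather than quote: the locus of the centers of circles tangent to two fixed disjoint circles is a conic with foci at the two given centers (an ellipse when the fixed circles are nested, a hyperbola when mutually exterior), degenerating to a parabola when one of the fixed circles is replaced by a line. Write $\sigma$ for the inversion carrying the regular configuration of \cref{fig:intro-n5}(left) to the Steiner chain, $Z$ for its center, $\S=\sigma(\S_{reg})$ and $\S'=\sigma(\S'_{reg})$ for the chain's two Soddy circles, with centers $A,B$ and radii $a,b$, and $\omega_1,\dots,\omega_N$ for the chain circles, $\omega_i$ of center $P_i$ (a vertex of $\P$) and radius $r_i$. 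Inversion preserves tangency, and in the regular picture each of the $N$ equal circles is tangent to both $\S_{reg}$ and $\S'_{reg}$, so every $\omega_i$ is tangent to both $\S$ and $\S'$; the argument then reduces to converting these tangencies into equations in $|P_iA|,|P_iB|$.

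Suppose first that $Z$ lies on neither $\S_{reg}$ nor $\S'_{reg}$, so $\S,\S'$ are genuine circles. Each tangency gives $|P_iA|=|\,a\pm r_i\,|$ and $|P_iB|=|\,b\pm r_i\,|$, and the signs are pinned down by the position of $Z$ through the elementary dichotomy that $\sigma$ maps the open disk of a circle $C\not\ni Z$ onto the open disk of $\sigma(C)$ when $Z$ lies outside $\overline{\mathrm{disk}\,C}$ and onto the complement of $\overline{\mathrm{disk}\,\sigma(C)}$ when $Z\in\mathrm{disk}\,C$, together with the fact that $\sigma$ respects inclusions among such regions. Feeding in $\mathrm{disk}\,\S_{reg}\subset\mathrm{disk}\,\S'_{reg}$ and the regular-picture tangencies: if $Z$ is interior to $\S_{reg}$ or exterior to $\S'_{reg}$ then $Z$ lies outside every $\omega_{reg,i}$, the disks of $\S,\S'$ come out nested, and one gets $|P_iA|+|P_iB|=a+b$ for all $i$, an ellipse with foci $A,B$; if $Z$ lies in the open annulus bounded by $\S_{reg}$ and $\S'_{reg}$ then the disks of $\S,\S'$ come out disjoint and non-nested and one gets $\big|\,|P_iA|-|P_iB|\,\big|=|a-b|$ for all $i$, a hyperbola with foci $A,B$ (the absolute value covers the exceptional index, if any, at which $Z$ lies inside a chain circle and that vertex falls on the other branch). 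Non-degeneracy — distinct foci, and $a+b>|AB|$ resp.\ $0<|a-b|<|AB|$ — follows from $\S,\S'$ being disjoint and from $P_1,\dots,P_N$ being a genuine, non-collinear point set lying on the locus.

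For the parabola, take $Z\in\S'_{reg}$ (the case $Z\in\S_{reg}$ is symmetric), so $\S'=\sigma(\S'_{reg})$ is a line $\ell$ and $\S$ is still a circle of center $A$, radius $a$. Tangency of $\omega_i$ to $\ell$ reads $\mathrm{dist}(P_i,\ell)=r_i$ and tangency to $\S$ reads $|P_iA|=a+r_i$ (sign as before), so eliminating $r_i$ gives $|P_iA|=\mathrm{dist}(P_i,\ell)+a=\mathrm{dist}(P_i,\ell')$, with $\ell'$ the line parallel to $\ell$ at distance $a$ on the side away from $A$: the focus--directrix equation of a parabola with focus $A$, the center of $\S$, and directrix $\ell'\parallel\ell$. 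Its second focus is the point at infinity perpendicular to $\ell$, i.e.\ the ``center'' of the degenerate Soddy line $\S'$, which is exactly the asserted statement that the center of $\S'$ is then at infinity and $\S'$ is a line parallel to the directrix.

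In each case the resulting conic depends only on $\S$ and $\S'$, not on $i$, so all $N$ vertices of $\P$ lie on it: that conic is $\E$, $\P$ is inscribed in it, and its foci are the Soddy centers. I expect the only delicate step to be the middle paragraph — setting up and correctly applying the disk-mapping dichotomy so that the eliminated relation is a \emph{sum} of distances precisely when $Z$ is interior to $\S_{reg}$ or exterior to $\S'_{reg}$ and a \emph{difference} precisely in the annular case; the remainder is bookkeeping and one-line conic identifications.
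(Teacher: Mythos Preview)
Your proposal is correct and rests on the same classical fact the paper invokes --- that the locus of centers of circles tangent to two fixed circles is a conic with foci at the two given centers (ellipse in the nested case, hyperbola in the mutually-exterior case, parabola when one degenerates to a line). The paper's own proof is a single sentence that simply cites this result as well-known, whereas you actually derive it from the tangency equations $|P_iA|=|a\pm r_i|$, $|P_iB|=|b\pm r_i|$ and, more importantly, carry out the case analysis tying the position of the inversion center $Z$ (inside $\S_{reg}$, in the annulus, on a boundary) to the sign pattern and hence to the conic type --- a step the paper leaves entirely implicit. So the approaches coincide, but yours is strictly more complete.
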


\begin{proof}
It is well-known that locus of the center of a circle internally tangent to two fixed circles pair of circles is an ellipse (resp. hyperbola) if one circle is contained in another (resp. disjoint). Likewise, said locus when one circle has infinite radius is a parabola \cite{lucca2009-chains}.
\end{proof}

Since $\P$ is inscribed to a conic $\E$ and circumscribed a circle $\C$:

\begin{corollary}
The family $\P$ is Ponceletian.
\end{corollary}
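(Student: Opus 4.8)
The plan is to deduce the claim directly from Poncelet's closure theorem. Recall that \cref{prop:ssf} shows every member of $\P$ is inscribed in the conic $\E$, while the Remark shows every member circumscribes the circle $\C$ centered at $I$. Since a circle is itself a conic, $(\E,\C)$ is a pair of conics for which there exists at least one $N$-gon simultaneously inscribed in the first and circumscribed about the second -- namely any one Steiner--Soddy polygon, whose existence is guaranteed because a Steiner chain is by hypothesis a \emph{closed} chain of $N$ mutually tangent circles. This is exactly the hypothesis of Poncelet's porism \cite{dragovic11}.

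First I would invoke Poncelet's closure theorem: from the existence of a single such $N$-gon it follows that every point of $\E$ is a vertex of an $N$-gon inscribed in $\E$ and circumscribed about $\C$, and that these $N$-gons vary in a one-parameter continuum which closes after exactly $N$ steps of the Poncelet map. By definition this continuum is a Poncelet porism, so it remains only to identify it with the Steiner--Soddy family. For this I would use the standard description of the one-parameter family of Steiner chains sharing the two Soddy circles $\S,\S'$: it is the orbit of a single chain under the one-parameter group of Möbius maps fixing both $\S$ and $\S'$. That group also fixes the common limit point $I$ of the associated coaxial pencil, and it carries $\C$ -- the inversive image of the regular configuration's incircle -- to itself. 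Hence, as the chain is rolled, the vertices of $\P$ sweep $\E$ while $\C$ stays put, so every Steiner--Soddy polygon belongs to the Poncelet porism determined by $(\E,\C)$; since both are one-parameter families, they coincide (on the real locus, up to the usual real/complex caveats of Poncelet's theorem).

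The only genuinely delicate point is the invariance of the caustic: one must check that $\C$ (equivalently $I$) is the same circle for every member of the family, so that there is a single Poncelet pair $(\E,\C)$ rather than a moving target. This follows from the observations above -- that $I$ is a fixed point of the Möbius group preserving the Soddy pair, and that $\C$, being the inversive image of the fixed incircle of the regular configuration under any member of the family, is preserved by that group. Everything else -- inscription in $\E$ and tangency of the sides to $\C$ -- has already been recorded, and Poncelet's theorem supplies the closure at no extra cost.
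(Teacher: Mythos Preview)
Your argument is correct and follows the paper's route: $\P$ is inscribed in $\E$ by \cref{prop:ssf} and circumscribes $\C$ by the preceding Remark, so Poncelet's closure theorem applies. The paper records this in a single line and stops.

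Your extra paragraph, verifying that the Steiner--Soddy family and the Poncelet family for $(\E,\C)$ actually coincide, is a reasonable scruple but more than the paper undertakes, and the M\"obius-group detour is not needed: both $\E$ (determined by the fixed Soddy centers) and $\C$ (the inversive image of the regular configuration's incircle, which is rotation-invariant) are defined independently of which chain in the porism one starts from, so the ``delicate point'' about invariance of the caustic is already settled by the definitions. One factual slip: the center $I$ of $\C$ is \emph{not} in general a limit point of the coaxial pencil through $\S,\S'$ (inversion does not carry centers to centers); what is true, and sufficient for your purpose, is that $\C$ itself lies in that pencil and is therefore preserved by the one-parameter M\"obius group fixing $\S$ and $\S'$.
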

In \cite{roitman2021-harmonic} it is shown that the pedal family $\H$ is also Ponceletian. Its envelope or caustic is known as the Brocard inellipse $\B$, see \cref{app:explicit} for explicit expressions.

Let $\theta_i$ denote the internal angle of $\P$ at its $i$-th vertex $P_i$. 
\begin{theorem}
The $\P$ family conserves  $\sum_{i=1}^N\tan(\theta_i/2)$ is conserved. 
\label{thm:sum_tang_n}
\end{theorem}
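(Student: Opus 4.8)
\noindent The plan is to convert the half-angle tangents of $\P$ into curvatures of the Steiner-chain circles and then invoke the curvature-moment invariance proved in \cite{schwartz2020-steiner}.

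First I would pin down the circle inscribed in $\P$. By the Remark the sides of $\P$ are tangent to the circle $\C$ centered at $I$, and the point of tangency on the side $P_{i-1}P_i$ is precisely the contact point $H_i$ of the $(i-1)$-st and $i$-th chain circles: the common tangent of those two circles at $H_i$ is perpendicular to the line of centers $P_{i-1}P_i$ and, by \cite[pp. 120,244--245]{wells1991}, passes through $I$, so $I H_i \perp P_{i-1}P_i$ and $|I H_i|$ equals the common distance $\rho$ from $I$ to every side of $\P$. Hence the two tangent segments drawn from the vertex $P_i$ to $\C$ land at $H_i$ and $H_{i+1}$, and each has length $r_i$, the radius of the $i$-th chain circle (the distance from a circle's center to a point where it touches a neighbour). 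Using the elementary identity ``tangent length $=\rho\cot(\theta_i/2)$'' at a vertex with interior angle $\theta_i$, this gives
\[
\tan\!\left(\frac{\theta_i}{2}\right)=\frac{\rho}{r_i}=\rho\,c_i ,
\]
where $c_i=1/r_i$ is the curvature of the $i$-th chain circle. (This holds on the generic open subset of the family where $I$ lies inside $\P$, which suffices since the asserted equality is analytic in the Poncelet parameter.)

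Next I would sum over $i$ to get $\sum_{i=1}^N \tan(\theta_i/2)=\rho\sum_{i=1}^N c_i$, i.e.\ $\rho$ times the first power-moment of the chain curvatures. The radius $\rho$ is constant along the family: $\C$ is the fixed caustic of the Poncelet porism $\P$ (equivalently, $\C$ is the inversive image of the incircle of the regular arrangement in \cref{fig:intro-n5}(left), and that incircle is concentric with the center about which the regular ring is rotated to generate the porism). By \cite{schwartz2020-steiner} the first $N-1$ moments $\sum_{i=1}^N c_i^{\,j}$, $1\le j\le N-1$, are invariant over the Steiner porism; the case $j=1$, multiplied by the constant $\rho$, is exactly the assertion. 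The same computation yields $\sum_{i=1}^N \tan^{j}(\theta_i/2)=\rho^{j}\sum_{i=1}^N c_i^{\,j}$ for all $1\le j\le N-1$, recovering the stronger statement announced in the introduction.

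The content of the argument is concentrated in the two geometric identifications above -- that the touch points of $\C$ are the chain's contact points, and that the tangent length from $P_i$ equals $r_i$ -- together with the constancy of $\rho$; the invariance of the curvature moments is imported wholesale from \cite{schwartz2020-steiner} and so is not itself the obstacle. Should one wish to be self-contained, the remaining task would be to prove directly that $\sum_i 1/r_i$ is a porism invariant, which can be done by passing to the concentric model (where the $N$ circles share a radius) and tracking how radii transform under the inversion; this is, however, strictly more laborious than the route sketched here.
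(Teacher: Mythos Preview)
Your proposal is correct and follows essentially the same route as the paper: both arguments identify the incircle touch points of $\P$ with the chain's contact points, derive $\tan(\theta_i/2)=\rho/r_i$ from the right triangle $P_iH_iI$, and then invoke the curvature-moment invariance of \cite{schwartz2020-steiner}. Your write-up is slightly more explicit in justifying the two geometric identifications and in noting the analytic-continuation argument for the degenerate configurations, but the underlying strategy is identical.
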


\begin{proof} Referring to
\cref{fig:intro-n5}, let $P_i$ (resp. $P_i'$) be the centers (resp. the contact points between) consecutive circles in the Steiner chain. Let $r$ (resp. $r_i$) denote the radius of $\C$ (resp. a circle in chain centered at $P_i$). Since $|P_i P_{i}'|=|P_i P_{i+1}'|=r_i$ and 
$|OP_{i}'|=|OP_{i+1}'|=r$, the line $P_i O$  bisects  $\widehat{{P_i'}P_iP_{i+1}'}=\theta_i$. Since $\H$ is the $I$-pedal of $P$, $\C$ is tangent to $P_iP_{i+1}$ at $P_i'$, so
 $\triangle{P_i P'_i O}$ is right-angled, i.e., $\widehat{P_i P_{i} O}=90^{\circ}$. Hence,
$\tan( \widehat{P_i P_{i} O})=\frac{O P_i'}{P_i P_i'}$. Thus:
\begin{equation}
\tan\frac{\theta_i}{2}=
\frac{r}{r_i}\;\;\;\mbox{hence}\;\;\;
\sum_{n=1}^{n}\tan^k(\theta_i/2)=
\sum_{n=1}^{n} \left(\frac{r}{r_i} \right)^k,\;\; \mbox{for any $k$.}
\label{eq:half-tan}
\end{equation}
The claim is obtained by invokig a result from \cite[Theorem 1]{schwartz2020-steiner}, namely that over Steiner's porism, the sum of the $k^{th}$ powers of curvatures of circles in chain, up to $N-1$, is invariant.
\end{proof}


\begin{remark}
When $\E$ is a hyperbola, the sign of $\tan(\theta_i/2)$ must be flipped for the two angles whose neighboring vertices lie on different branches of the hyperbola.
\end{remark}

\begin{remark} Richard Schwartz \cite{schwartz2021-private} has suggested an elegant interpretation of conservations of the above type:
All homogeneous polynomials in 2-variables of degree less than $N$ have the same average on the unit circle as they do on a regular $N$-gon inscribed in the circle.
\end{remark}

\begin{figure}
    \centering
    \includegraphics[width=\textwidth]{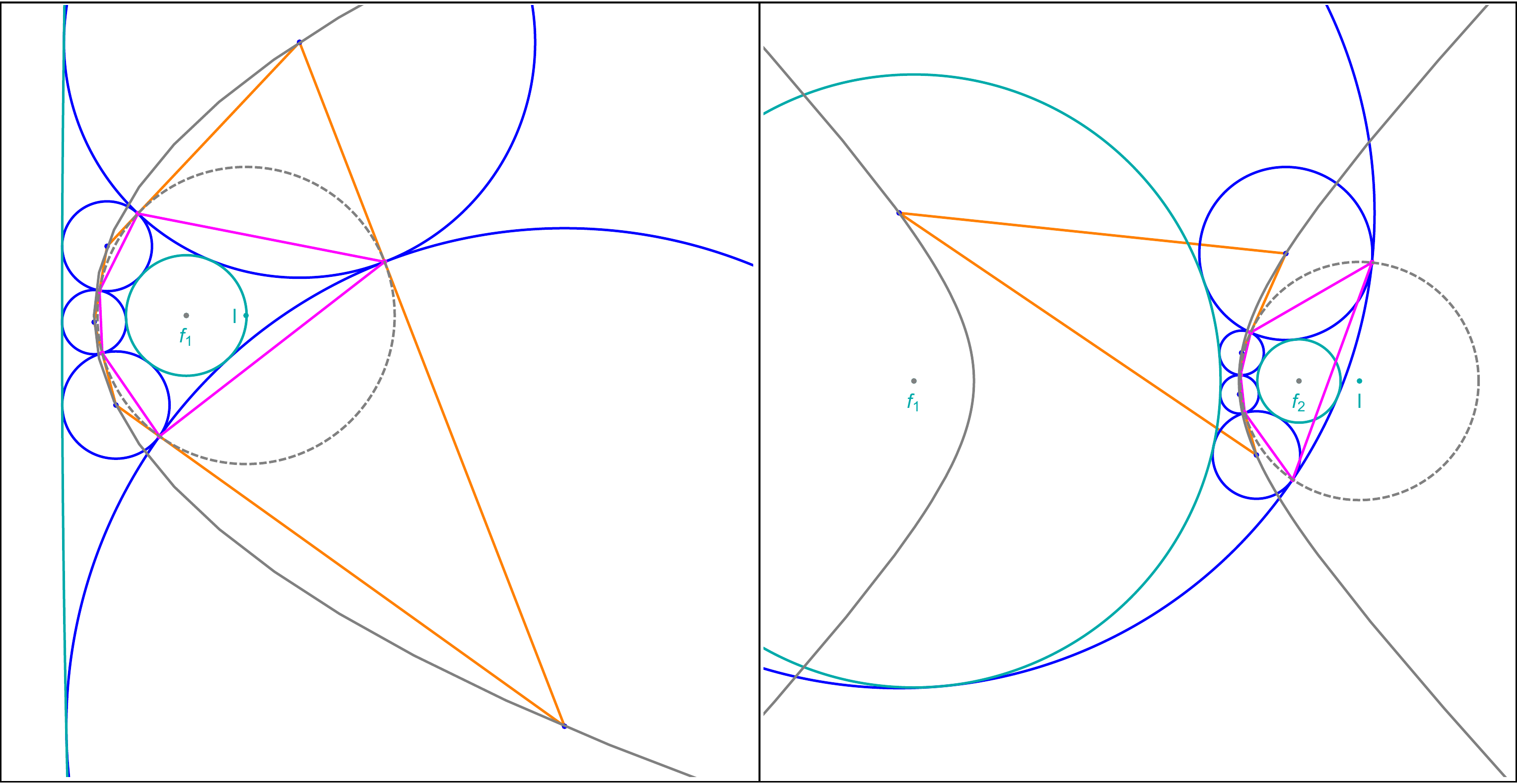}
    \caption{\textbf{Left:} If the inversion center is on $\S_{reg}'$, the Soddy polygon $P$ (orange) is inscribed to a parabola (gray), and $\S'$ degenerates to a line (vertical cyan), parallel to the directrix; in this case, $I$, the center of the caustic ${\C}$ (dashed grey), is on $\S$ (cyan circle).
    \textbf{Right:} When $\S$ and $\S'$ are disjoint, the family is hyperbola-inscribed. The family oscillates between two states: (i) as shown, all but one vertex lie on a single branch, and (ii) not shown, all vertices lie on a single branch.}
    \label{fig:n5-deg-ev}
\end{figure}
\begin{proposition}
Over a hyperbola-inscribed Steiner-Soddy family, the vertices of $\P$ oscillate between two states: (i) all on a single branch of the hyperbola or (ii) $N-1$ contiguous vertices on one branch and one on the other branch. This corresponds to whether $I$ is interior (resp. exterior) to $\H$, the $I$-pedal polygon.
\end{proposition}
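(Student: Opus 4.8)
The plan is to separate the purely combinatorial part (which of the two states occurs) from the identification with the position of $I$, obtaining the combinatorial part from a topological observation plus one geometric lemma about where the caustic sits.

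\emph{Step 1: contiguity via the projective picture.} A nondegenerate conic is a topological circle in the real projective plane, and for a hyperbola this circle consists of the two branches together with the two points at infinity (each asymptotic end of one branch meeting an asymptotic end of the other at a point at infinity). Since the Steiner chain winds once around, the centers $P_1,\dots,P_N$ occur in this cyclic order along $\E$ and vary continuously with no two ever colliding. The two points at infinity cut the circle into two arcs -- the two branches -- so at every instant the vertices split into $k$ \emph{contiguous} vertices on one branch and $N-k$ \emph{contiguous} on the other; equivalently, the number of branch-switching sides (sides joining the two arcs) is always $0$ or $2$. Thus the first assertion reduces to showing $\min(k,N-k)\le 1$, which already holds automatically for $N\le 3$, so the real content is $N\ge 4$.

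\emph{Step 2: locating the caustic -- the main obstacle.} I claim the caustic circle $\C$ lies strictly inside the open convex region $R$ bounded by one branch of $\E$ (the region on the concave side of that branch, which contains the associated focus). Granting this: the supporting line of any chord of the \emph{other} branch already meets $\E$ in two points on that branch, hence cannot enter the disjoint region $R$ (and no line lies entirely inside $R$, since a line in a recession direction of $R$ has ``shallow'' slope and so is a chord meeting both branches); therefore $\C$ has no tangent line that is a chord of the other branch, so $\P$ has no side interior to that branch, which forces the other branch to carry at most one vertex. Hence $k\ge N-1$, i.e. $\P$ is in state (i) or (ii), the ``$N-1$ contiguous'' part being automatic. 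Proving the claim is the hard step; I would attack it through the inversive pre-image: $\C$ is the inverse image of the incircle of the regular $N$-gon, which nests strictly between the two pre-images $\S_{reg},\S'_{reg}$ of the Soddy circles, and in the hyperbola case the inversion center lies in the annulus between them, so transporting these nesting/separation relations through the inversion should place $\C$ strictly on one side of $\E$ -- in a branch region, never in the region between the branches. A softer route: since $\C$ is a \emph{fixed} circle, it suffices to exhibit one configuration of the family with all vertices on a single branch, for then $\C$, being inscribed in that convex one-branch polygon, lies in that branch's region once and for all.

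\emph{Step 3: identifying the states with the position of $I$.} The vertices of $\H$ are the consecutive tangency points $P_i'$ of the chain, which lie on $\C$ in monotone cyclic order; write $\phi_1<\dots<\phi_N$ for their angular positions, so the gaps $\phi_{i+1}-\phi_i$ are positive and sum to $2\pi$, whence at most one gap exceeds $\pi$. By an elementary fact about cyclic polygons, the center $I$ of $\C$ is interior to $\H$ precisely when every gap is $<\pi$, and exterior precisely when exactly one gap is $>\pi$. On the other hand the vertex $P_i$ of $\P$ is the intersection of the tangents to $\C$ at $P_{i-1}'$ and $P_i'$, which a one-line computation places on the internal bisector of $\angle P_{i-1}'IP_i'$ at signed distance $r\sec\!\big((\phi_i-\phi_{i-1})/2\big)$ from $I$: an ordinary finite vertex when its gap is $<\pi$, receding to the point at infinity as the gap $\to\pi$, and re-emerging on the \emph{opposite} branch once the gap exceeds $\pi$. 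Combined with Step 2 (only states (i) and (ii) occur), ``all gaps $<\pi$'' matches state (i) and ``one gap $>\pi$'' matches state (ii), which is exactly the claimed correspondence with $I$ interior versus exterior to $\H$.
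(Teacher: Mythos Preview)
Your route is genuinely different from the paper's, and the overall architecture is sound, but Step~2 carries a real gap that you yourself flag. The paper never tries to locate $\C$ relative to the branches of $\E$. It stays on the Steiner-chain side: in the hyperbola case $\S$ and $\S'$ are disjoint, and a chain circle sits on the distal branch precisely when it is internally tangent to one of the Soddy circles (as opposed to externally tangent to both). Such a circle $\Gamma$ is then cast as the solution of an Apollonius problem for $\S$, $\S'$ and one of its chain neighbours (the neighbour being externally tangent to all of them), and uniqueness of that Apollonius solution is invoked to conclude that at most one such $\Gamma$ can occur. For the last clause the paper observes that when $\Gamma$ exists it is internally tangent to its two neighbours, so the common perpendiculars at those tangency points force $I$ to lie outside $\H$. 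None of this requires knowing where $\C$ sits.

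In your Step~2 the deduction ``$\C\subset R\ \Rightarrow$ no side of $\P$ is a chord of the far branch $\Rightarrow$ at most one vertex there'' is correct and pleasant, but $\C\subset R$ is not established. Your inversive sketch stops at ``should place $\C$ strictly on one side of $\E$''; the softer route presupposes that state~(i) is attained somewhere along the porism, which is part of what must be shown, so you would have to produce such a configuration independently (for instance by analysing the symmetric position of the chain, which is still work). Step~3 is essentially right, though its closing clause (``re-emerging on the \emph{opposite} branch'') again leans on Step~2: knowing that the two signs of $r\sec(\text{gap}/2)$ land on different branches of $\E$ is immediate once $I\in R$, but not before. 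In short, your proposal stands or falls with a proof of $\C\subset R$; the paper's tangency-signature argument sidesteps this entirely.
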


\begin{proof}
When $\S$ and $\S'$ are disjoint (hyperbola case), the circles in the chain are either (all) externally tangent to both, or there exists a circle $\Gamma$ which is internally tangent to one of the  Soddy circles. In this case, $\Gamma$ is centered at a point which lies on the distal branch of the hyperbola, and is also the unique solution to an Apollonius problem for the  two Soddy circles and one neighbor in the chain, which is  externally tangent to all of them.
So there can be at most one center on the distal branch. In this case, $\Gamma$ is internally tangent to  two of the circles in the chain and the perpendiculars at the tangency points meet at $I$, which lies outside the pedal polygon.
\end{proof}

\vspace{-0.33cm}
\subsubsection*{Relationship with the Homothetic family} Let the {\em homothetic family} refer to a Poncelet family interscribed between two concentric, homothetic conics. Let their foci be called ``inner'' and ``outer'' ones. In \cite{roitman2021-bicentric} it was shown that the harmonic family is the polar image of the homothetic family with respect to an inversion circle centered on one of the outer foci. Referring to \cref{fig:homoth}:

\begin{proposition}
The Steiner-Soddy family is the polar image of the homothetic family with respect to an inversion circle centered on one of its inner foci.
\end{proposition}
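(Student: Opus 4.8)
The plan is to reduce the claim to the already-established polar/inversive description of the harmonic family. Recall the two facts we may use: (1) by the Remark at the start of \cref{sec:all-n}, $\P$ is the pedal polygon of the harmonic polygon $\H$ with respect to the point $I$ (the center of the caustic $\C$); and (2) by \cite{roitman2021-bicentric}, as quoted, $\H$ is the polar image of the homothetic family with respect to an inversion circle centered on one of the \emph{outer} foci of the homothetic pair. So the strategy is: start from the homothetic family, take polars with respect to a circle centered at an \emph{inner} focus, and show that the resulting Poncelet family is the $I$-pedal of $\H$, hence equals $\P$.

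First I would set up coordinates with the homothetic conics confocal-scaled and centered at the origin, inner foci at $(\pm c_{in},0)$ and outer foci at $(\pm c_{out},0)$, and fix an inner focus $F$ as the pole. Next I would record how polarity with respect to a circle centered at $F$ acts: each vertex of the homothetic polygon maps to a line, each side-line maps to a point, incidence and tangency are preserved, and — crucially — the inscribed (caustic) conic maps to a conic, while the circumscribed conic of the homothetic family maps to a circle precisely because one of \emph{its} foci is the center of inversion (a focus-centered polar of a conic is a circle). This last point is the standard ``focal inversion'' fact and is exactly why $F$ must be a focus; choosing an inner versus outer focus is what distinguishes $\P$ from $\H$. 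Thus the polar image of the homothetic family with respect to the circle at $F$ is automatically a Poncelet family inscribed in a conic $\E$ and circumscribing a circle $\C$; the content is to identify this family with $\P$.

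The key step is then to compare the two polar images of the \emph{same} homothetic family taken from the two different foci of the same branch structure. Polarity from $F$ and polarity from the other inner focus $F'$ differ by a projective transformation, but more useful here is the relation between polarity from an inner focus and polarity from an outer focus: I would show that the $I$-pedal operation (project $I$ orthogonally onto the sides) applied to $\H$ is, after the focal-polar dictionary, exactly the change of pole from an outer focus to an inner focus, with $I$ being the polar image of the line at infinity / the appropriate distinguished point. Concretely: a point $X$ maps under $F$-polarity to the line through $X$'s $F'$-polar reflected... — rather than chase this, the clean route is to verify that the foot of the perpendicular from $I$ to a chord of $\C$ is the pole (w.r.t.\ the $F$-circle) of the corresponding tangent line of the Brocard-type caustic, and that this is the same point produced by $F$-polarizing the vertex of the homothetic polygon. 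Equivalently, one checks that $\H$'s $I$-pedal and the $F$-inner-focus polar of the homothetic family have the same vertices because both are governed by the same cross-ratio on the pencil of lines through $I$; since a Poncelet family is determined by its inscribed conic and circumscribed conic together with one position, matching $\E$ (foci at the Soddy centers, by \cref{prop:ssf}) and $\C$ suffices.

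The main obstacle I expect is bookkeeping the precise correspondence between the two foci: one must pin down \emph{which} inner focus, and verify that the center of the polar circle lands on the Soddy-circle center rather than elsewhere, i.e.\ that the $F$-polar of the homothetic caustic is a conic whose foci are the images of the homothetic family's foci under $F$-polarity, and that these images coincide with the centers of $\S$ and $\S'$ described in \cref{prop:ssf}. This is a finite computation once coordinates are fixed, but it is where sign/branch choices (ellipse vs.\ hyperbola, which focus is ``inner'') must be handled carefully; the parabolic degeneration, where an outer focus of the homothetic family goes to infinity, should then drop out as the limiting case already identified in \cref{prop:ssf}.
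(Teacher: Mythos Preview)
The paper does not supply a proof of this proposition: it is stated with a reference to \cref{fig:homoth} and nothing more. So there is no argument in the paper to compare against; your proposal stands or falls on its own.

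On its own, the outline has two weak points. First, there is a bookkeeping slip in the focal step. You write that under polarity centered at an inner focus $F$, ``the inscribed (caustic) conic maps to a conic, while the circumscribed conic of the homothetic family maps to a circle precisely because one of its foci is the center of inversion.'' But an \emph{inner} focus is, on the natural reading, a focus of the caustic (the inner conic), not of the outer circumscribing conic; so it is the caustic whose polar is a circle. Under polarity, ``inscribed in $A$, circumscribing $B$'' becomes ``inscribed in $B^{*}$, circumscribing $A^{*}$''. You should redo this step carefully, check that the circle lands on the correct side (namely as the caustic of the image family, which is what Steiner--Soddy requires), and in the process verify that the paper's inner/outer labeling means what you think it does.

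Second, the heart of your plan --- that passing from the outer-focus polar (which yields $\H$) to the inner-focus polar corresponds exactly to taking the $I$-pedal --- is only gestured at. The phrases ``governed by the same cross-ratio on the pencil of lines through $I$'' and ``matching $\E$ and $\C$ suffices'' are not yet arguments. In fact, since a Poncelet family is determined by its conic pair (that is the porism; no ``one position'' is needed), if you can show directly that the polar image of the homothetic pair is $(\E,\C)$ you are done without ever invoking $\H$. Given that the Appendix already records explicit formulas for $\E$, $\C$, and $I$, the cleanest route is to polar-reciprocate the homothetic pair with respect to a circle at the appropriate focus and match the resulting conic--circle pair against those formulas, bypassing the pedal detour entirely.
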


\begin{figure}
    \centering
    \includegraphics[width=.6\textwidth,frame]{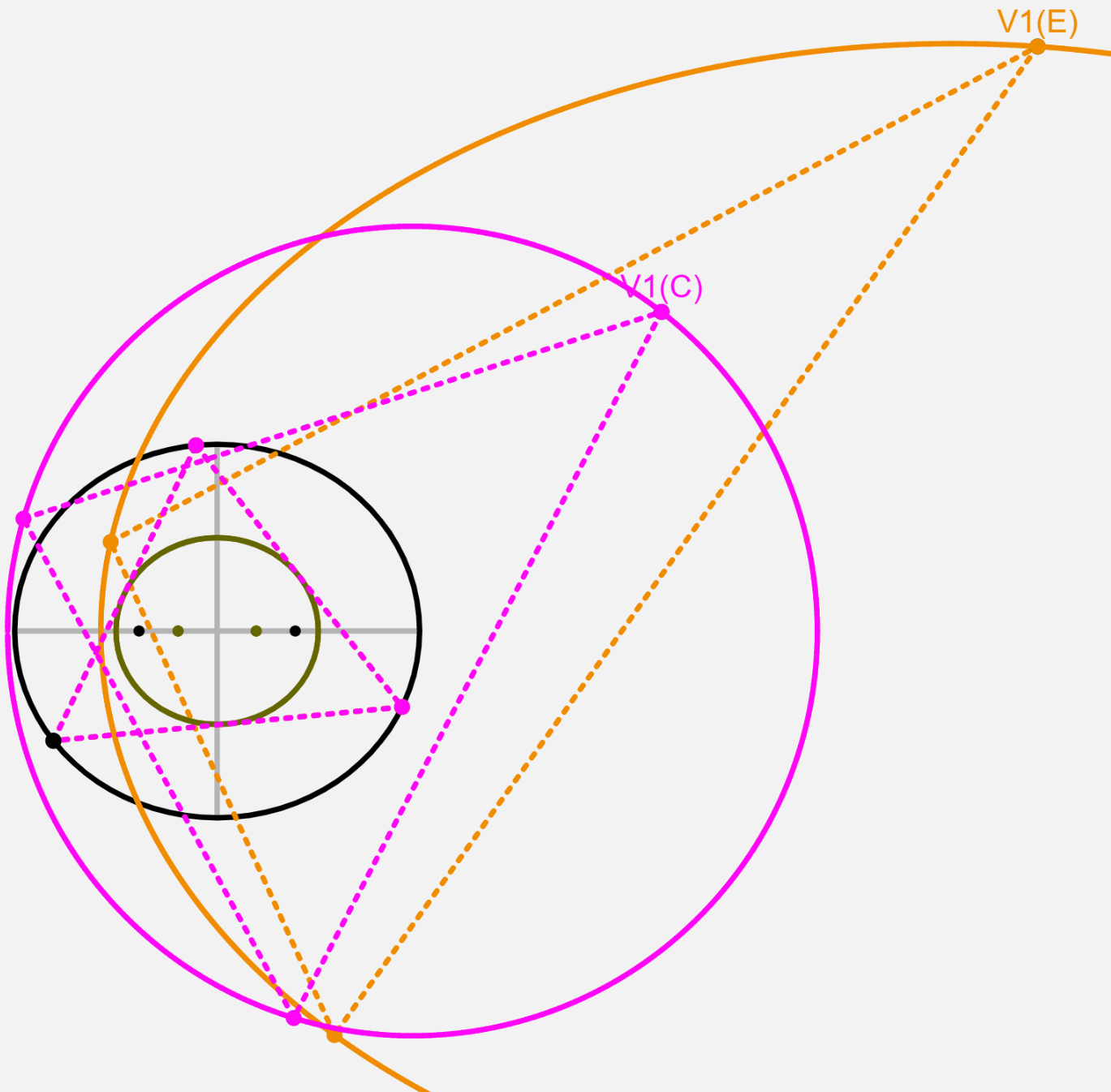}
    \caption{The Steiner-Soddy family, orange (resp. harmonic family, magenta), is the polar image of the ``homothetic'' family, with respect to a circle centered on an inner (resp. outer) focus.}
    \label{fig:homoth}
\end{figure}

\vspace{-0.33cm}
\subsubsection*{Centroid Loci}
In \cite{sergei2016-com} it was proved that the locus of the vertex and area centroids $C_0,C_2$ are conics over any Poncelet family whereas the locus of $C_1$ is not always a conic. Arseniy Akopyan reminded us that if a polygon circumscribes a circle (of center $O$), $C1,C_2,O$ are collinear and  $(C_1-O)=(3/2)(C_2-O)$ \cite{akopyan2022-private}. Referring to \cref{fig:perimeter}:

\begin{corollary}
Over the Steiner-Soddy porism of any number $N$ of sides, the locus of the perimeter centroid $C_1$ is a conic.
\end{corollary}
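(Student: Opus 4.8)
The plan is to combine Akopyan's collinearity relation with the known conic-locus result for the area centroid. First I would emphasize the one structural feature that makes this work: by construction every polygon in the Steiner-Soddy family circumscribes the \emph{same} circle $\C$, and its center $I$ --- the common point at which consecutive tangent lines of the Steiner chain meet --- is fixed as the family evolves. It is this fixedness of $I$, rather than any finer property of the family, that will be exploited.

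Next I would invoke the remark attributed to Akopyan: since each $\P$ circumscribes the circle centered at $I$, its perimeter centroid $C_1$ and area centroid $C_2$ satisfy $C_1 - I = \tfrac{3}{2}\,(C_2 - I)$. Because $I$ is the same point for every member of the family, this identity is realized by a single fixed homothety $h$ of center $I$ and ratio $3/2$ carrying $C_2$ to $C_1$, uniformly over the porism. Hence the locus of $C_1$ is precisely the image $h(\Gamma)$ of the locus $\Gamma$ of $C_2$.

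Finally I would apply the theorem of \cite{sergei2016-com}: over any Poncelet family the locus of the area centroid $C_2$ is a conic. Since the Steiner-Soddy family is Ponceletian (established above, being inscribed in the conic $\E$ and circumscribing $\C$), $\Gamma$ is a conic; as $h$ is an affine map and affine maps send conics to conics, $h(\Gamma)$ is a conic, which is the assertion.

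I expect no genuine obstacle: the substantive content is already packaged in the two cited results. The single point I would flag for the reader is that the argument depends essentially on $I$ being stationary --- if the incircle's center moved with the family, $h$ would be a varying homothety and the image of the $C_2$-conic under it need not be a conic. This is exactly why $C_1$ fails to trace a conic for a generic Poncelet family even though $C_2$ always does, a contrast worth recording explicitly.
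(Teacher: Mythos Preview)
Your proposal is correct and follows essentially the same route as the paper: combine Akopyan's relation $C_1-I=\tfrac32(C_2-I)$ for circle-circumscribing polygons with the Schwartz--Tabachnikov result that the $C_2$-locus is a conic over any Poncelet family, and observe that the fixedness of $I$ makes the homothety global. Your write-up is in fact more explicit than the paper's, which leaves the corollary unproved beyond stating those two ingredients in the preceding paragraph.
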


By symmetry, it follows that the major axes of the loci of $C_0,C_1$ are coaxial with the outer ellipse of the porism.

\begin{figure}
    \centering
    \includegraphics[trim=0 125 20 25,clip,width=.7\textwidth,frame]{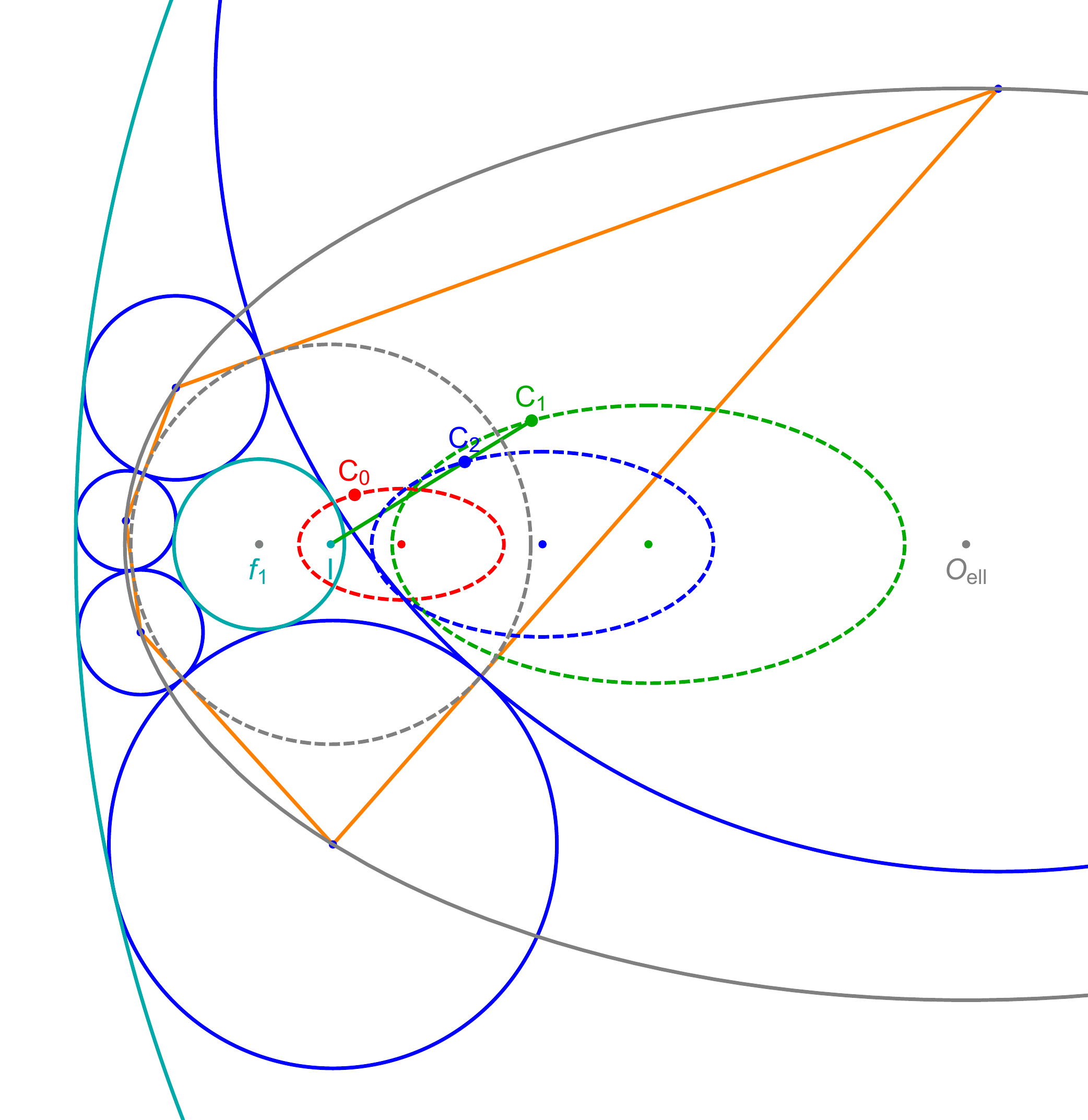}
    \caption{Over the Soddy-Steiner porism (orange), the loci of the vertex, area, and perimeter centroids $C_0,C_1,C_2$ are conics (dashed red, green, blue) coaxial with the outer ellipse (gray). Notice $I,C_2,C_1$ are collinear (green line) and $|IC_1|/|IC_2|=3/2$ \cite{akopyan2022-private}.}
    \label{fig:perimeter}
\end{figure}

\section{The Special case of N=3}
\label{sec:n3}
Consider an $N=3$ Steiner-Soddy family. We present a direct  proof  of \cref{thm:sum_tang_n} based on Descartes' theorem.

\begin{proof}
Let $1/\rho=1/r_1+1/r_2+1/r_3$. i.e., the sum of the curvatures of the (three) circles in the chain. By Descartes' theorem \cite{mw}:
{\small
\begin{align*}
\left(\frac{1}{r_1}+ \frac{1}{r_2}+ \frac{1}{r_3}+  \frac{1}{r_4}\right)^2 &= 
2\left( \frac{1}{r_1^2}+ \frac{1}{r_2^2}+ \frac{1}{r_3^2}+  \frac{1}{r_4^2}\right)\\
\left(\frac{1}{r_1}+ \frac{1}{r_2}+ \frac{1}{r_3}+  \frac{1}{r_5}\right)^2  &=
2 \bigg(\frac{1}{r_1^2}+ \frac{1}{r_2^2}+ \frac{1}{r_3^2}+  \frac{1}{r_5^2}\bigg)
\end{align*}
}
where $r_4$ and $r_5$ denotes the radii of the inner and outer Soddy circles. Subtracting and factoring, we obtain:
{\small
\[ \left(\frac{1}{r_4}-\frac{1}{r_5}\right)
\left(\frac{2}{\rho}+  \frac{1}{r_4}+\frac{1}{r_5}   \right)
=2 \left(\frac{1}{r_4}-\frac{1}{r_5}\right)\left(\frac{1}{r_4}+\frac{1}{r_5}\right)\]}
Hence $1/\rho=(1/r_4+1/r_5)/2$. Referring to \cref{fig:half-tangs-parabola}(left), it can be seen that:
\begin{equation}\tan\frac{A}{2}+\tan \frac{B}{2}+\tan\frac{C}{2}= \frac{r}{\rho}
\label{eq:soma_tg}
\end{equation}
Finally, since $r_4$, $r_5$, and $r$ are fixed, $\rho$ is constant, and so is  the sum of half tangents.
\end{proof}
\begin{remark}
The outer Soddy circle is a line when $r_5=+\infty$. By \cref{eq:soma_tg} this gives $\tan{\frac{A}{2}}+ \tan{\frac{B}{2}}+  \tan{\frac{C}{2}}=2$.
Note the latter was derived in \cite{yff2007-isoperim} in the context of showing if a certain condition in a triangle is satisfiable, which turns out to be equivalent to ours.
\end{remark}

\begin{proposition}
Let $\triangle{ABC}$ be in the Steiner-Soddy family and $\triangle{A'B'C'}$ be its $I$-pedal. Over the porism, the left and right hand side of each equation are conserved and identical:
\[\tan{\frac{A}{2}}+
\tan{\frac{B}{2}}+\tan\frac{C}{2}=\cot(A')+ \cot(B') + \cot(C')\]
\[\tan^2{\frac{A}{2}}+
\tan^2{\frac{B}{2}}+\tan^2\frac{C}{2}=
\cot^2(A')+ \cot^2(B') + \cot^2(C')\]

\end{proposition}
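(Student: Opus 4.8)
The plan is to reduce both displayed equalities to one classical fact about contact triangles, after which the conservation claims are immediate. First I would recall that the sides of $\triangle ABC$ are tangent to the caustic $\C$, which is centred at $I$; hence $\C$ is the incircle of $\triangle ABC$ and $I$ is its incenter. Therefore the $I$-pedal triangle $\triangle A'B'C'$ — the feet of the perpendiculars from $I$ onto the three sides — is exactly the intouch (contact) triangle of $\triangle ABC$, with $A',B',C'$ the touch points on $BC,CA,AB$ respectively.

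Next I would compute the angles of $\triangle A'B'C'$. Writing $X=A'$, $Y=B'$, $Z=C'$ for the touch points, equality of tangent lengths from each vertex makes $\triangle AYZ$, $\triangle BZX$, $\triangle CXY$ isosceles with apex angles $A,B,C$, so $\angle AZY=\angle AYZ=\tfrac{\pi-A}{2}$ and cyclically. A one-line angle chase at $Z$ then gives $\angle C'=\pi-\tfrac{\pi-A}{2}-\tfrac{\pi-B}{2}=\tfrac{\pi}{2}-\tfrac{C}{2}$, and similarly $\angle A'=\tfrac{\pi}{2}-\tfrac{A}{2}$, $\angle B'=\tfrac{\pi}{2}-\tfrac{B}{2}$. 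Hence $\cot A'=\tan\tfrac{A}{2}$, $\cot B'=\tan\tfrac{B}{2}$, $\cot C'=\tan\tfrac{C}{2}$, which establishes both displayed identities \emph{termwise}, and a fortiori for the two sums — slightly stronger than what is stated.

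For conservation, the quantity $\tan\tfrac{A}{2}+\tan\tfrac{B}{2}+\tan\tfrac{C}{2}$ is invariant over the porism by \cref{thm:sum_tang_n}. For the quadratic version I would combine this with the classical triangle identity $\tan\tfrac{A}{2}\tan\tfrac{B}{2}+\tan\tfrac{B}{2}\tan\tfrac{C}{2}+\tan\tfrac{C}{2}\tan\tfrac{A}{2}=1$ (valid since $\tfrac{A}{2}+\tfrac{B}{2}+\tfrac{C}{2}=\tfrac{\pi}{2}$): then $\sum\tan^2\tfrac{A}{2}=\big(\sum\tan\tfrac{A}{2}\big)^2-2$ is a function of an invariant, hence itself invariant; this transfers to $\sum\cot^2$ of the intouch angles by the termwise identity above. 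Alternatively one may cite \cref{eq:half-tan}, which identifies $\sum_i\tan^k(\theta_i/2)$ with $\sum_i (r/r_i)^k$, together with \cite[Theorem 1]{schwartz2020-steiner} for $k=2=N-1$.

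I do not expect a real obstacle here: the substance is the identification of the incenter's pedal triangle with the contact triangle plus the elementary angle formula. The only points requiring care are bookkeeping — matching the labels $A'\leftrightarrow A$, $B'\leftrightarrow B$, $C'\leftrightarrow C$ consistently — and the hyperbola-inscribed regime, where one must adopt the signed half-tangent convention of the Remark following \cref{thm:sum_tang_n} (equivalently, read the identities in the ellipse/parabola case) for the termwise equalities to hold verbatim.
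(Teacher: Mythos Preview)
Your proposal is correct and follows essentially the same approach as the paper: both identify the $I$-pedal as the contact (intouch) triangle and establish the termwise identity $\cot A'=\tan\tfrac{A}{2}$ by an elementary angle chase, then invoke \cref{eq:half-tan}/\cref{thm:sum_tang_n} for conservation. Your isosceles tangent-length computation is a slightly different (and arguably cleaner) route to the contact-triangle angles than the paper's perpendicularity argument, and your explicit derivation of the $k=2$ conservation via $\sum\tan^2\tfrac{A}{2}=\bigl(\sum\tan\tfrac{A}{2}\bigr)^2-2$ is a nice addition, but the overall strategy is the same.
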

\begin{proof}
Referring to \cref{fig:half-tangs-parabola}(left), let the circles in the Steiner chain be centered at $A,B,C$, the vertices of $\P$. The perpendiculars in $A',B',C'$ meet at $O$, the incenter of $\triangle{ABC}$ as well as the circumcenter of $\triangle{A' B' C'}$. From properties of tangents, $AO\perp BC$. On the other hand  
$OC'\perp AB$. This proves that $\widehat{OAC'}$
and $\widehat{OC' B'}$ are either equal, or supplementary. Since both angles are acute, the latter is impossible. Hence 
$\widehat{OAC'}=\widehat{OC' B'}$. 
Similarly, $\widehat{O B C'}=\widehat{O C' A'}$. Thus:
\[\cot (C')=\cot(\frac{\widehat{A}+\widehat{B}}{2})=\cot(\frac{\pi -\widehat{C}}{2})=\tan\frac{C}{2}\]
\cref{eq:half-tan} finishes the proof.
\end{proof}

\begin{proposition}
In the $N=3$ case, the outer Soddy circle degenerates to a line when any one of the following equivalent conditions is fulfilled: (i) $\tau=2$; 
(ii) the outer conic is a parabola;
(iii) the circumcenter of $\B_3$ is at a co-vertex of the Brocard inellipse, whose aspect ratio is $\sqrt{5}/2=\phi-1/2 \approx 1.118$, where $\phi=(1+\sqrt{5})/2$ is the golden ratio. 
\end{proposition}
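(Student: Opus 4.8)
The plan is to connect everything to the Brocard angle $\omega$ of the intouch triangle $\H$. By the preceding proposition, $\cot\omega=\cot A'+\cot B'+\cot C'=\tan\tfrac{A}{2}+\tan\tfrac{B}{2}+\tan\tfrac{C}{2}=\tau$, so $\omega$ is constant along the porism and $\tau$ records it. Since the caustic $\B$ of the $\H$-family is the common Brocard inellipse, its shape and placement are determined by $\omega$ and by $R$, the radius of $\C$ (the circumradius of each $\H$): the foci of $\B$ are the Brocard points $\Omega_1,\Omega_2$ of $\H$, its center is the Brocard midpoint $X_{39}$ of $\H$, its semi-axes are $a_{\B}=R\sin\omega$ and $b_{\B}=2R\sin^2\omega$, and the circumcenter $I$ of $\H$ (the center of $\C$), being equidistant from $\Omega_1$ and $\Omega_2$, lies on the minor axis of $\B$ at distance $|IX_{39}|=R\cos\omega\sqrt{1-4\sin^2\omega}$ from $X_{39}$. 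In particular the $\H$-family is a Brocard porism, which up to similarity is pinned down by $\omega$; and since $\P$ is recovered as the $I$-pedal of $\H$, the Steiner--Soddy family itself is determined, up to similarity, by $\tau$.

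Now assemble the equivalences. First, (ii) $\iff$ [a Soddy circle degenerates to a line] is \cref{prop:ssf} for $N=3$: $\E$ is a parabola exactly when one focus — a Soddy center — recedes to infinity, turning $\S'$ into a line parallel to the directrix. Second, [$\S'$ a line] $\Rightarrow$ (i) is the Remark following the $N=3$ proof of \cref{thm:sum_tang_n} ($r_5=\infty$ in \cref{eq:soma_tg} gives $\tau=2$); conversely, since $\tau$ determines the Steiner--Soddy family up to similarity while the parabola-inscribed family exists (\cref{prop:ssf}) and has $\tau=2$, any family with $\tau=2$ must be that one, so (i) $\Rightarrow$ (ii). Third, (i) $\iff$ (iii): the condition that the circumcenter $I$ of $\H$ (the ``circumcenter of $\B_3$'' in the statement) be a co-vertex of $\B$ reads $|IX_{39}|=b_{\B}$, i.e. $R\cos\omega\sqrt{1-4\sin^2\omega}=2R\sin^2\omega$; both sides are positive for $0<\omega<\tfrac{\pi}{6}$, so squaring is reversible and collapses the equation to $1-5\sin^2\omega=0$, i.e. $\sin^2\omega=\tfrac{1}{5}$, i.e. $\cot\omega=\tau=2$. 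In that case $a_{\B}/b_{\B}=\frac{1}{2\sin\omega}=\frac{\sqrt{5}}{2}=\phi-\tfrac{1}{2}$, the remaining clause of (iii).

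The computational heart — and the step I expect to take the most care — is establishing the three metric relations quoted for the Brocard inellipse, since the argument for (i) $\Leftrightarrow$ (ii) also leans on them (they are what make a Brocard porism determined by its Brocard angle). I would obtain $a_{\B},b_{\B}$ from the focal tangent-line identity $d(\Omega_1,\ell)\,d(\Omega_2,\ell)=b_{\B}^2$ applied to a sideline $\ell$ of $\H$, together with $a_{\B}^2=b_{\B}^2+|\Omega_1\Omega_2|^2/4$, and $|IX_{39}|$ from the fact that $I,\Omega_1,\Omega_2$ and the symmedian point $K$ of $\H$ lie on the Brocard circle of diameter $|IK|=\frac{R\sqrt{1-4\sin^2\omega}}{\cos\omega}$, along which $\angle KI\Omega_j=\omega$, so that $|IX_{39}|=|IK|\cos^2\omega$. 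A secondary point to check is that every value of $\omega$ in $\bigl(0,\tfrac{\pi}{6}\bigr]$ is actually attained along the family, making $\tau\mapsto$ family a genuine bijection; this follows from $\tau=\sqrt{3}$ at the symmetric (equilateral) member and $\tau\to\infty$ as the configuration degenerates. Granting these, the elementary algebra above closes the proof.
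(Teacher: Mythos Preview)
The paper states this proposition without proof, so there is nothing to compare against directly; your proposal supplies an argument where the paper offers none. The route you take---identifying $\tau=\cot\omega$ via the preceding proposition and then reading off (iii) from the standard Brocard-porism data $a_\B=R\sin\omega$, $b_\B=2R\sin^2\omega$, $|IX_{39}|=R\cos\omega\sqrt{1-4\sin^2\omega}$---is correct, and the algebra collapsing $|IX_{39}|=b_\B$ to $\sin^2\omega=1/5$ (hence $\cot\omega=2$ and $a_\B/b_\B=\sqrt{5}/2$) is clean. The equivalence (i)$\Leftrightarrow$(ii) via ``$\tau$ determines the Steiner--Soddy family up to similarity'' is also sound, since $\omega$ pins down the Brocard porism of intouch triangles and $\P$ is recovered from $\H$ by taking tangents to $\C$ at its vertices.

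Two small remarks. First, the paper's earlier Remark literally says ``$\P$ is the pedal polygon of $\H$ with respect to $I$'', which is the reverse of the relation you (correctly) use; what is true and what you need is that $\H$ is the $I$-pedal of $\P$, so $\P$ is the $I$-antipedal of $\H$. Second, the surjectivity of $\omega$ over $(0,\pi/6]$ that you flag at the end is not actually needed: for (i)$\Rightarrow$(ii) you only use injectivity of $\tau\mapsto(\text{family up to similarity})$ together with the existence of the parabola-inscribed family from \cref{prop:ssf}, both of which you already have.
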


\begin{figure}
     \centering
     \begin{subfigure}[b]{0.48\textwidth}
         \centering
         \includegraphics[trim=1400 500 1000 100,clip,   width=\textwidth]{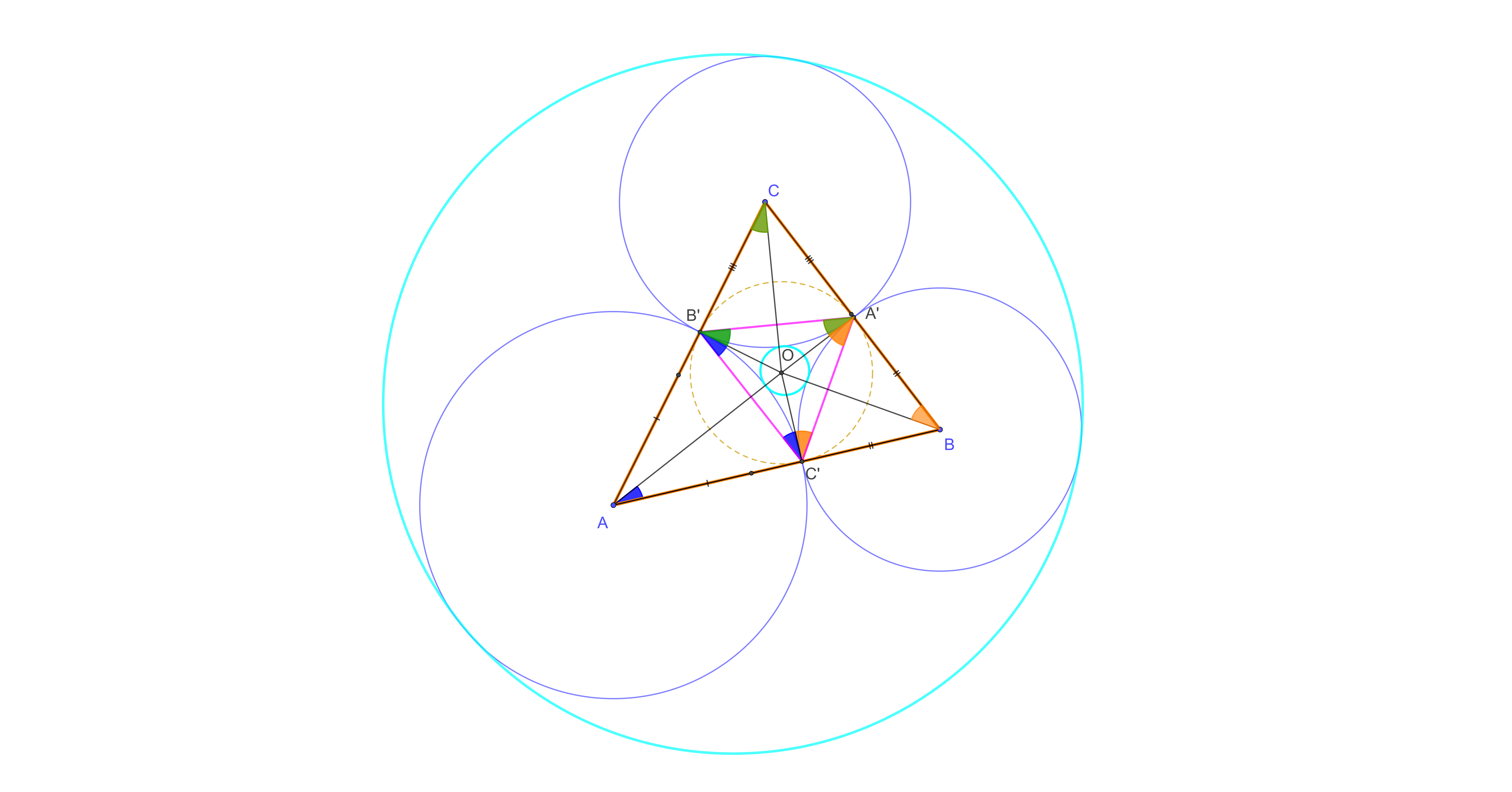}
     \end{subfigure}
     \rulesep
     \begin{subfigure}[b]{0.5\textwidth}
         \centering
        \includegraphics[width=\textwidth]{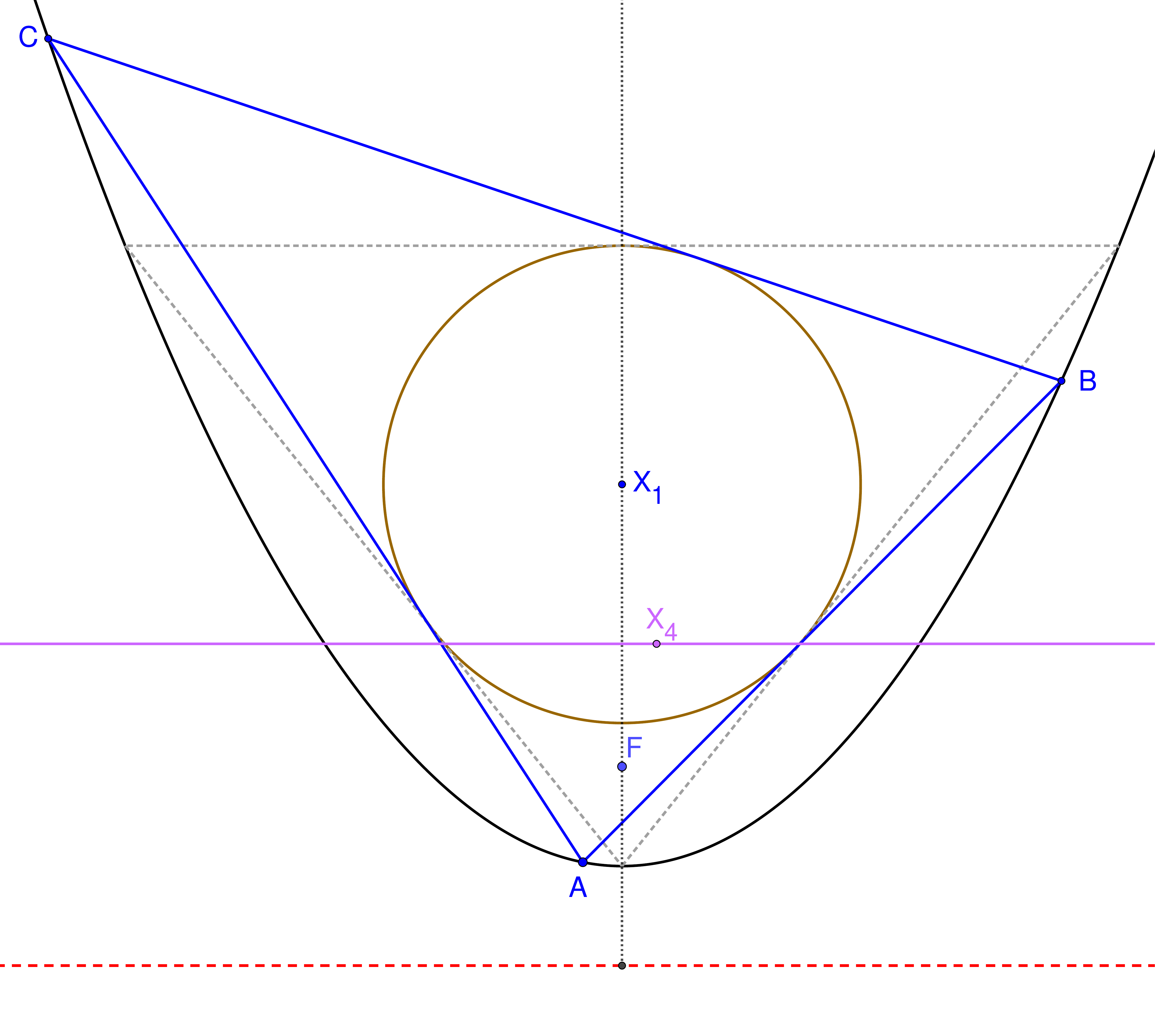}
     \end{subfigure}
     \caption{
     \textbf{Left:} Consider a $\triangle ABC$ and 3 mutually-tangent circles (blue) centered on $A,B,C$. These are tangent at the contact points $A',B',C'$ of the incircle with the sides of the triangle, which are the vertices of the pedal triangle with respect to the incenter $I$. Same-color angles  have the same measure. \textbf{Right:} Let $ABC$ be a parabola-inscribed Poncelet family of triangles whose caustic is a circle centered on the parabola axis. Over the family, the locus of the orthocenter $X_4$ is a line (purple) parallel to the directrix (dashed red).}
     \label{fig:half-tangs-parabola}
\end{figure}



\section{Loci in the N=3 case}
\label{sec:n3-loci}
Referring to \cref{fig:half-tangs-parabola}(right):

\begin{proposition}
For any parabola-inscribed Poncelet family whose caustic is an axis-centered circle, the locus of the orthocenter $X_4$ is a line parallel to the directrix. 
\end{proposition}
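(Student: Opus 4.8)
The plan is to work in affine coordinates in which the parabola is $y=x^2/(4p)$, so that its axis is the $y$-axis and its directrix is the horizontal line $y=-p$; thus a line ``parallel to the directrix'' is simply a line $y=\mathrm{const}$. I parametrize the parabola by abscissa, $P(t)=(t,\,t^2/(4p))$, and write the axis-centered caustic as the circle $x^2+(y-k)^2=\rho^2$. The first routine step is to record that the chord $P(t_i)P(t_j)$ has equation $(t_i+t_j)\,x-4p\,y-t_it_j=0$, and that imposing tangency of this chord to the caustic yields the symmetric biquadratic relation
\[
(4pk+t_it_j)^2=\rho^2\bigl[(t_i+t_j)^2+16p^2\bigr].
\]
Crucially, because the caustic is centered on the axis, this relation contains no mixed term of the form $t_it_j(t_i+t_j)$; this is precisely the feature that makes the next step work.

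The heart of the argument is to show that $ab+bc+ca$ is constant over the family, where $a,b,c$ are the abscissas of the vertices of a member triangle $P(a)P(b)P(c)$. The key observation is that the two sides emanating from $P(a)$ are exactly the two tangent lines drawn from $P(a)$ to the caustic; hence $b$ and $c$ are precisely the two roots of the relation above read as a quadratic in the free parameter, with $a$ held fixed. Writing $\mu:=8pk-2\rho^2$, Vieta on that quadratic gives $b+c=-\mu a/(a^2-\rho^2)$. Substituting $b+c=(a+b+c)-a$ and clearing denominators shows that $a$ is a root of the cubic
\[
t^3-e_1\,t^2-(\rho^2+\mu)\,t+e_1\rho^2=0,\qquad e_1:=a+b+c .
\]
By the symmetry of the construction, $b$ and $c$ satisfy the same cubic, so $a,b,c$ are its three roots. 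Although the coefficients of $t^2$ and $t^0$ involve the varying quantity $e_1$, the coefficient of $t$ does not; hence Vieta yields $ab+bc+ca=-(\rho^2+\mu)=\rho^2-8pk$, which depends only on the fixed parabola and caustic and is therefore an invariant of the porism.

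It remains to compute the orthocenter. Intersecting the altitudes from $P(a)$ and $P(b)$ (each perpendicular to the opposite chord, whose slope is $(b+c)/(4p)$, resp.\ $(a+c)/(4p)$) is a short computation that yields $X_4=(x_H,y_H)$ with
\[
y_H=-4p-\frac{ab+bc+ca}{4p}.
\]
Combining this with the previous step, $y_H=2k-4p-\rho^2/(4p)$ is independent of the particular triangle, so $X_4$ traces the horizontal line $y=y_H$, which is parallel to the directrix, as claimed. (A couple of degenerate configurations — $a^2=\rho^2$ or $\mu=0$ — are sidestepped by working with a generic member and invoking continuity.)

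The step I expect to be the main obstacle is the middle one: recognizing that the tangency relation, regarded as a quadratic in one vertex parameter with the opposite vertex held fixed, collapses — via the symmetry of the triangle — into a single cubic whose linear coefficient is independent of the triangle. The hypothesis that the caustic be axis-centered is exactly what is needed for this collapse; for an off-axis circle the biquadratic acquires a $t_it_j(t_i+t_j)$ term and the argument fails. By comparison, the orthocenter computation and the passage to coordinates are entirely routine.
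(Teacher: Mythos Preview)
Your argument is correct and follows the same outline as the paper's proof (contributed by Zaslavsky): parametrize the parabola by abscissa, show that $e_2=ab+bc+ca$ is constant over the family, and note that the ordinate of the orthocenter equals $-4p-e_2/(4p)$ (the paper works with $4p=1$, where this is $-(1+ab+bc+ca)$). The only difference is in how the invariance of $e_2$ is obtained. The paper first derives the $N=3$ closure condition for the unit parabola, $y_0=r^2+r$, by looking at the limiting triangle in which one vertex runs off to infinity (so the two adjacent sides become the vertical tangents $x=\pm r$ and the third side is the horizontal tangent $y=y_0-r$); it then computes $b+c$ and $bc$ from the tangency of the two sides through $A$ and simplifies $a(b+c)+bc$ to a constant using that closure relation. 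Your symmetric cubic/Vieta device bypasses the closure condition entirely: by using the tangency of all three sides you force $a,b,c$ to be the roots of a cubic whose linear coefficient is visibly independent of the triangle. This is a mild but genuine streamlining of the same proof.
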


The following proof was kindly contributed by Alexey Zaslavsky \cite{zaslavsky2021-private}:

\begin{proof}
Consider the unit parabola $y=x^2$, and let $y_0$, $r$ be the center and the radius of the incircle. Then the points $(r,y_0-r)$ and $(-r,y_0-r)$ lie on the parabola, thus $y_0=r^2+r$. Now let $A(a,a^2)$, $B(b,b^2)$, $C(c,c^2)$ be the vertices of the triangle. Then the distances from the incenter $(0,r^2+r)$ to lines AB and AC equal r and we obtain b+c and bc as functions of a. After this we have that the ordinate of orthocenter $-(1+ab+ac+bc)=r^2+2r-1$ do not depend on $a$.
\end{proof}

\begin{remark} Consider the parabola $4cy=x^2$ and the circle centered at $(0,y_0)$. For the pair to admit Poncelet triangles, it can be shown $r= 4y_0c/\sqrt{16c^2 + x_0^2}$, where $x_0=2\sqrt{-2c^2 + y_0c + 2 \sqrt{c^3 (c + y0}}$. In this case, the locus of $X_4$ is the line $y=-4c + x_0^2/(4c)=(-6c^2 + y_0c + 2\sqrt{c^3(c + y_0}))/c$.
\end{remark}
 
\begin{corollary}
Over a parabola-inscribed Soddy-Steiner family, the locus of its orthocenter $X_4$ is a line parallel to the directrix.
\end{corollary}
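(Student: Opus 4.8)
The plan is to obtain this as a direct consequence of the preceding Proposition (parabola-inscribed Poncelet family with an axis-centered circular caustic $\Rightarrow$ locus of $X_4$ is a line parallel to the directrix). So the work reduces to checking its three hypotheses for the $N=3$ Steiner-Soddy family in the degenerate case: (a) the family is parabola-inscribed, (b) its caustic is a circle, and (c) that circle is centered on the axis of the parabola.

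Items (a) and (b) are already in hand. By \cref{prop:ssf}, the family is parabola-inscribed precisely when the inversion center lies on $\S_{reg}$ or $\S'_{reg}$. And the sides of every $\P$ are tangent to the fixed circle $\C$ (the Remark opening \cref{sec:all-n}); since $\P$ is Ponceletian with outer conic $\E$ (\cref{prop:ssf} and the Corollary following it), $\C$ is exactly the caustic. So the only real content is (c): that $I$, the center of $\C$, lies on the axis of $\E$.

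For (c) I would argue by symmetry. The Steiner porism underlying $\P$ is the one-parameter family of circles externally tangent to the inner Soddy circle $\S$ and internally tangent to the outer Soddy circle $\S'$; the reflection $\sigma$ in the line $\ell$ joining the centers of $\S$ and $\S'$ fixes both $\S$ and $\S'$ setwise and preserves tangency, so it permutes this family of circles. Consequently $\sigma$ permutes the pairwise tangency points of mutually tangent members of the family, which all lie on $\C$, so $\sigma(\C)=\C$ and $I\in\ell$. It remains to identify $\ell$ with the axis of $\E$ in the parabola case: there $\S'$ degenerates to a line, and $\ell$ is the line through the center of $\S$ perpendicular to $\S'$. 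Writing $\rho$ for the radius of $\S$, a chain circle of radius $t$ has its center at distance $t$ from $\S'$ and distance $\rho+t$ from the center of $\S$; hence $\E$ is the parabola with focus the center of $\S$ and directrix the translate of $\S'$ away from that center by $\rho$ — consistent with \cref{prop:ssf} — and its axis is precisely the perpendicular to the directrix through the focus, i.e., $\ell$. Therefore $I$ lies on the axis of $\E$.

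With (a)–(c) established, the parabola-inscribed $N=3$ Steiner-Soddy family is a parabola-inscribed Poncelet family of triangles whose caustic is an axis-centered circle, and the preceding Proposition applies verbatim. The one place that requires care is step (c): pinning down that the symmetry line $\ell$ of the porism coincides with the axis of the outer parabola. An alternative would be to verify $I$-on-axis by a short coordinate computation from the explicit formulas in \cref{app:explicit}, but the symmetry route avoids coordinates entirely and is cleaner.
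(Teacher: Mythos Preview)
Your proposal is correct and follows the same route as the paper: the Corollary is stated immediately after the Proposition with no separate proof, so the intended argument is exactly to check that the parabola-inscribed Steiner-Soddy family satisfies the Proposition's hypotheses. You spell out more than the paper does, especially the symmetry argument for (c); the paper leaves this implicit (and the explicit formulas in \cref{app:explicit} show $I$ has zero $y$-coordinate, confirming it), but your coordinate-free justification is sound and arguably cleaner.
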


For example, assume the parabola is $4cy=x^2$. Then the locus of $X_4$ is the line $y=-7c/4$.

\begin{figure}
    \centering
    \includegraphics[trim=20 300 0 0,clip,width=.8\textwidth,frame]{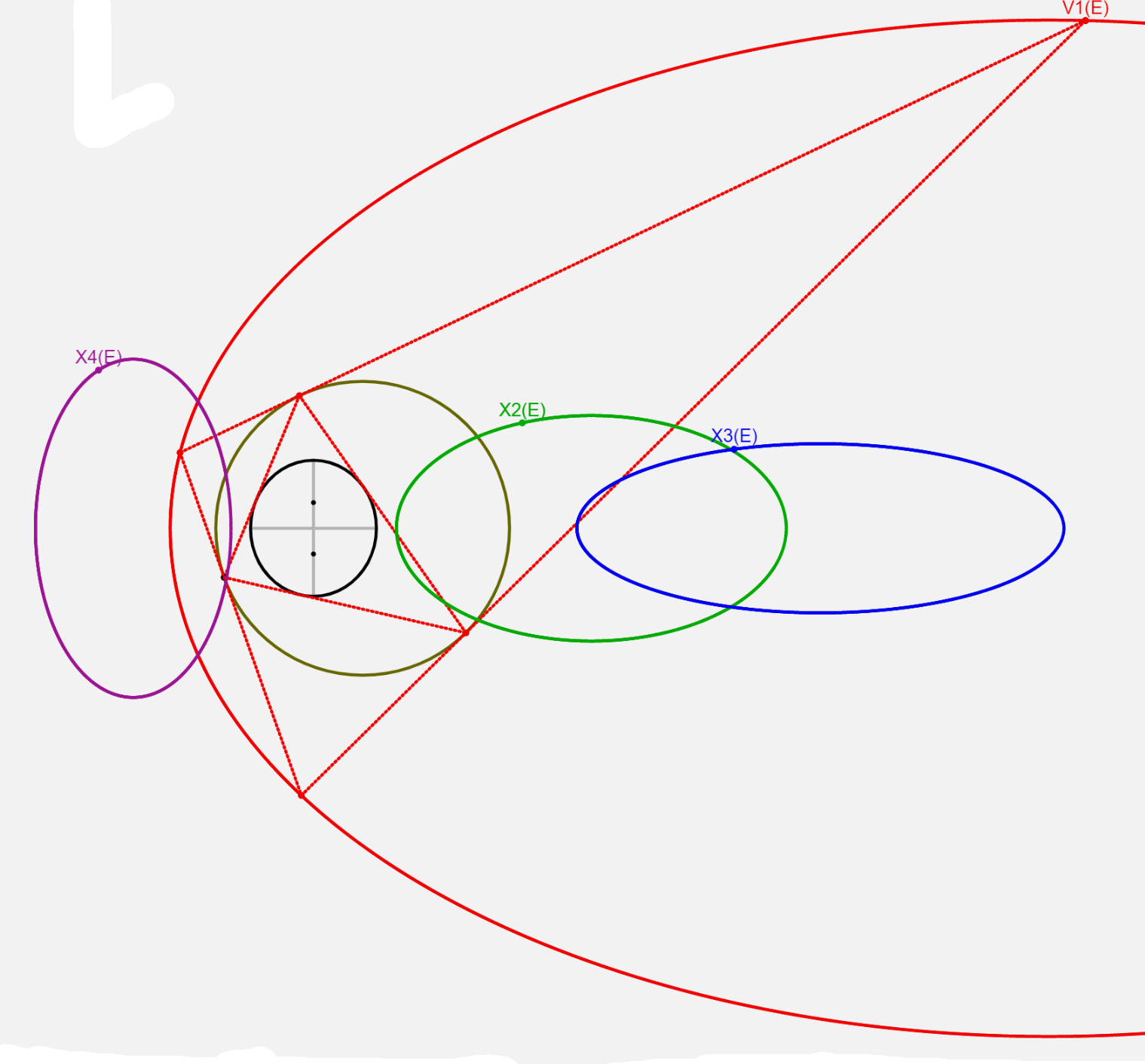}
    \caption{Over the Steiner-Soddy family, the loci of the centroid $X_2$ (green), circumcenter $X_3$ (blue) and orthocenter $X_4$ (purple) are conics.}
    \label{fig:x234}
\end{figure}
 
 It is known that if the intouch triangle is acute, its circumcenter $X_3$ (resp. symmedian $X_6$) coincides with the incenter $X_1$ (resp. Gergonne point $X_7$) of the reference.

While the intouch of $\S_3$ is acute (its harmonic $O$-pedal), $X_1$ and $X_7$ are stationary. Otherwise, while $S_3$ is everted (it is inscribed in a hyperbola), $X_1$ and $X_7$ follow arcs curves of degree at least four. Here are some experimental observations about the loci of $X_k$, some of which are explicitly derived in \cref{app:explicit}:

\begin{compactenum}
\item $X_k, k=$2, 3, 4, 5 are conics, as predicted in \cite{helman2021-theory}, see \cref{fig:x234}.
\item $X_k, k=$6, 8, 9, 10 are also conics, though not in general \cite{helman2021-theory}.
\item $X_k, k=$13, 14, 15, 16, 80 are circles.
\item $X_k, k=$20, 77, 170 are segments along the major axis of $\E$.
\item $X_{105}$ is a circle centered on the major axis of $\E$ and tangent to it at two points.
\end{compactenum}

\section{Comparing Poncelet Conservations}
\label{sec:cons}
Conservations of various Poncelet families studied so far as well as their polar images with respect to certain significant points is shown in \cref{tab:conservations}. As we study more of these canonical families, one of our goals (still elusive) is to identify a functional pattern in the conserved quantities.

{\small
\begin{table}
\centering
\begin{tabular}{|l|l|l|l|l|}
\hline
family & invariant & polar center/conic & new family & new invariant \\
\hline
 \multirow{2}{*}{confocal} & \multirow{2}{*}{$\sum\cos\theta_i$ \cite{akopyan2020-invariants,bialy2020-invariants}} & outer ell & tangential & $\sum\cos(2\theta_i)$ \cite{reznik2021-fifty-invariants} \\
& & inner/outer focus & bicentric  & $\sum\cos\theta_i$ \\
\hline
\multirow{2}{*}{bicentric} & \multirow{2}{*}{$\sum\cos\theta_i$ \cite{roitman2021-bicentric}} & outer circle & bic-tang & $\sum\sin(\theta_i/2)$ \cite{bellio2021-parabola-inscribed} \\
& & incircle inward & MacBeath & $\prod\cos\theta_i$ \\
 \hline
\multirow{2}{*}{incircle} & \multirow{2}{*}{$\sum\cos\theta_i$ \cite{akopyan2020-invariants}} & outer ell & circum & $\prod\sin\theta_i$ \\
& & incircle inward & circum & $\prod\sin\theta_i$ \\
\hline
{circum} & $\prod\cos\theta_i$ \cite{akopyan2020-invariants,caliz2020-area-product} & 
outer circle & incircle & $\sum\cos\theta_i$ \\ 
\hline
\multirow{2}{*}{homothetic} & \multirow{2}{*}{$\sum\cot\theta_i$ \cite{galkin2022-regular}}& outer focus & harmonic & $\sum\cot\theta_i$ \\
& & inner focus & Steiner-Soddy &$\sum\tan(\theta_i/2)$  \\
\hline
\multirow{2}{*}{harmonic} & \multirow{2}{*}{$\sum\cot\theta_i$ \cite{roitman2021-harmonic}} & outer circle & Steiner-Soddy & $\sum\tan(\theta_i/2)$\\
& & focus & harmonic & $\sum{\cot\theta_i}$ \\
\hline
\end{tabular}
\caption{Poncelet families, polar transformations, and invariants}
\label{tab:conservations}
\end{table}
}

\subsubsection*{Acknowledgements}
\noindent We thank Alexey Zaslavsky and Arseniy Akopyan for contributing some proofs and facts, and Richard Schwartz for useful discussions. The first author is fellow of CNPq and coordinator of Project PRONEX/ CNPq/ FAPEG 2017 10 26 7000 508.

\appendix

\section*{Appendix: Explicit Formulas}
\label{app:explicit}
Referring to \cref{fig:intro-n5}, let the inversion circle be centered at $\mbox{inv}=(x_0,0)$ and have radius $\lambda$. and $\alpha=\pi/N$. Let $\E$ denote the conic the Steiner-Soddy family is inscribed to and $\C$ its circular caustic.

\begin{proposition}
The $x$ coordinates of foci $f_1$ and $f_2$ and vertex $V$ of $\mathcal{E}$ are given by:
{\scriptsize
\begin{align*}
f_{1,2}=&\frac{ x_0\left(R^4\cos^4\alpha  - R^2 \cos^2\alpha ({\lambda}^2 - 2x_0^2) \pm 2 R^2 \sin\alpha {\lambda}^2 + 2R^2({\lambda}^2 - 2x_0^2) - x_0^2({\lambda}^2 - x_0^2) \right)  } {     R^4\cos^4\alpha+ 2R^2\cos^2\alpha x_0^2 - 4R^2x_0^2 + x_0^4 }\\
V_x=&\frac{ \cos(2\alpha)R^2 x_0 + R(R x_0 + 2 {\lambda}^2 - 4x_0^2) - 2x_0({\lambda}^2 - x_0^2) }{ R^2\cos(2\alpha) + R^2 - 4R x_0 + 2x_0^2 }
\end{align*}
}
\end{proposition}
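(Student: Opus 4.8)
The plan is to reduce the statement to an explicit inversion of the symmetric, concentric configuration of \cref{fig:intro-n5}(left) and to compute the centres of two circles lying on its symmetry axis. Normalise so that the regular $N$-gon has circumradius $R$ with one vertex at $(R,0)$; then the $N$ mutually tangent circles have radius $R\sin\alpha$, and by the $N$-fold symmetry the two Soddy circles $\S_{reg},\S'_{reg}$ are concentric at the origin with radii $R(1-\sin\alpha)$ and $R(1+\sin\alpha)$. The one tool I will use repeatedly is that the image under the inversion with centre $(x_0,0)$ and radius $\lambda$ of a circle centred at $(q,0)$ with radius $\rho$ (not through $(x_0,0)$) is again a circle, whose centre lies on the $x$-axis at $x_0+\lambda^2(q-x_0)/\big((q-x_0)^2-\rho^2\big)$; indeed such a circle is symmetric about the $x$-axis, hence so is its image (reflection about the $x$-axis commutes with this inversion), so the $x$-axis cuts the image in a diameter with endpoints the images of $(q\pm\rho,0)$, and one averages.

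For the foci: by \cref{prop:ssf} the foci $f_1,f_2$ of $\E$ are the centres of the inversive images of $\S_{reg}$ and $\S'_{reg}$, so the formula above with $q=0,\ \rho=R(1\mp\sin\alpha)$ gives $f_{1,2}=x_0\big(x_0^2-R^2(1\mp\sin\alpha)^2-\lambda^2\big)/\big(x_0^2-R^2(1\mp\sin\alpha)^2\big)$. Putting both over the common denominator $D=\big(x_0^2-R^2(1-\sin\alpha)^2\big)\big(x_0^2-R^2(1+\sin\alpha)^2\big)$ and using $(1-\sin\alpha)^2(1+\sin\alpha)^2=\cos^4\alpha$ together with $(1-\sin\alpha)^2+(1+\sin\alpha)^2=2(2-\cos^2\alpha)$, one gets $D=R^4\cos^4\alpha+2R^2\cos^2\alpha\,x_0^2-4R^2x_0^2+x_0^4$; expanding the two numerators and again writing $\sin^2\alpha=1-\cos^2\alpha$ yields exactly the stated expression, the only term odd in $\sin\alpha$ (hence the only one distinguishing $f_1$ from $f_2$) being $\pm 2R^2\sin\alpha\,\lambda^2$.

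For the vertex $V$: the whole configuration is symmetric about the $x$-axis and $\E$ has both foci on the $x$-axis, so the $x$-axis is the focal axis of $\E$ and meets $\E$ precisely in its vertices. The chain-circle $\gamma$ centred at $(R,0)$ is itself symmetric about the $x$-axis, so its inversive image is a circle whose centre lies on the $x$-axis; but that centre is a vertex of $\P$, hence lies on $\E$, hence is a vertex $V$ of $\E$. Applying the inversion formula with $q=R,\ \rho=R\sin\alpha$ gives $V_x=x_0+\lambda^2(R-x_0)/\big((R-x_0)^2-R^2\sin^2\alpha\big)$; since $(R-x_0)^2-R^2\sin^2\alpha=x_0^2-2Rx_0+R^2\cos^2\alpha$ and $2R^2\cos^2\alpha=R^2(1+\cos 2\alpha)$, clearing a factor $2$ turns this into precisely the displayed formula, whose denominator is $R^2\cos 2\alpha+R^2-4Rx_0+2x_0^2=2\big(x_0-R(1-\sin\alpha)\big)\big(x_0-R(1+\sin\alpha)\big)$.

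Everything beyond the inversion formula is mechanical half-angle bookkeeping; the single step needing an idea is the identification of $V$ --- recognising that a vertex of $\E$ is realised as the centre of the inversive image of the one chain-circle on the symmetry axis. Two caveats are worth recording. First, the labelling of $f_1,f_2$ is pinned only up to which of the two Soddy images one calls inner versus outer; in the parabola case $x_0=R(1\pm\sin\alpha)$ the corresponding Soddy image is a line, its ``centre'' escapes to infinity, and indeed the matching factor of $D$ vanishes. Second, at $x_0=R(1\pm\sin\alpha)$ the circle $\gamma$ passes through the inversion centre, so its image is a line and the formula for $V_x$ must be read as a limit --- which is exactly why that formula's denominator carries the factor $2\big(x_0-R(1-\sin\alpha)\big)\big(x_0-R(1+\sin\alpha)\big)$ and vanishes there.
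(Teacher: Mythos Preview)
The paper states this proposition without proof (it appears in the appendix of explicit formulas with no accompanying argument), so there is nothing to compare against. Your derivation is correct and is exactly the natural one: it uses nothing beyond \cref{prop:ssf} (foci of $\E$ are the centres of the inverted Soddy circles), the symmetry of the configuration about the $x$-axis, and the standard formula for the centre of the inversive image of an axis-symmetric circle. The identification of $V$ via the image of the chain circle centred at $(R,0)$ is the right observation, and your algebraic reductions check (I verified the numerator/denominator expansions for both $f_{1,2}$ and $V_x$). The caveats you record about the parabolic degeneration at $x_0=R(1\pm\sin\alpha)$ are accurate and worth keeping.
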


\begin{proposition}
The parameters of the caustic $\C=(I,r)$ are given by:
{\small
\[  I=\left[x_0 + \frac{2x_0\lambda^2}{ 2(R^2\cos^2\alpha -  x_0^2)},0\right],\;\;\;r=\frac{\lambda^2 R\cos\alpha}{ |R^2\cos^2\alpha - x_0^2|}\]}
\end{proposition}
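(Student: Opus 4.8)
The plan is to set up the Steiner chain as an explicit inversive image and then compute the inellipse (caustic) of the Steiner-Soddy polygon directly. First I would fix coordinates as in the appendix: place the regular-$N$-gon configuration so that the $N$ unit-like mutually tangent circles are centered at the vertices of a regular $N$-gon inscribed in a circle of radius $R$ (so each chain-circle has radius $R\sin\alpha$ with $\alpha=\pi/N$), and the incircle of that $N$-gon — which passes through the contact points of consecutive circles — has radius $R\cos\alpha$ and is centered at the origin. The inversion circle is centered at $\mathrm{inv}=(x_0,0)$ with radius $\lambda$. By the Remark preceding \cref{prop:ssf}, $\C$ is the circle through the inversive images of the contact points, i.e. $\C$ is the image under inversion of the circle of radius $R\cos\alpha$ centered at the origin. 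So the whole computation reduces to: \emph{find the center and radius of the inversive image of a given circle.}

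The key steps, in order: (1) Recall the standard formula for the image of a circle of radius $\rho$ centered at $c$ under inversion in the circle of radius $\lambda$ centered at $p$, valid when $p$ is not on the circle: the image is a circle with center $p + \dfrac{\lambda^2 (c-p)}{|c-p|^2-\rho^2}$ and radius $\dfrac{\lambda^2 \rho}{\bigl||c-p|^2-\rho^2\bigr|}$. (2) Substitute $c=(0,0)$, $\rho = R\cos\alpha$, $p=(x_0,0)$, $\lambda=\lambda$. Then $|c-p|^2-\rho^2 = x_0^2 - R^2\cos^2\alpha$, and $c-p = (-x_0,0)$, so the center becomes $\bigl(x_0 - \tfrac{\lambda^2 x_0}{x_0^2-R^2\cos^2\alpha},\,0\bigr) = \bigl(x_0 + \tfrac{\lambda^2 x_0}{R^2\cos^2\alpha - x_0^2},\,0\bigr)$, matching the stated $I$ (the printed factor $2x_0\lambda^2/(2(\cdot))$ is the same thing with an unsimplified $2$), and the radius becomes $\dfrac{\lambda^2 R\cos\alpha}{|R^2\cos^2\alpha - x_0^2|}$, exactly the stated $r$. (3) Finally, I must justify that this image circle really is the caustic of the Steiner-Soddy family and not merely \emph{a} circle tangent to one polygon: by the Remark, the sides of every $\P$ in the porism are tangent to $\C$, and $\P$ is a Poncelet polygon (the Corollary after \cref{prop:ssf}), so $\C$ is the common inscribed conic — the caustic — of the whole family. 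Since $\C$ is determined by the fixed data $R,\alpha,x_0,\lambda$ and not by the choice of polygon in the porism, the formula is the caustic formula.

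The main obstacle is essentially bookkeeping rather than conceptual: verifying that my normalization of the regular configuration (the value $R\cos\alpha$ for the incircle radius, and the placement of its center at the origin on the line through $\mathrm{inv}$) is consistent with whatever normalization the appendix implicitly uses for $\E$, $f_{1,2}$, and $V_x$ — i.e. that "$R$" here is the same $R$ appearing in the focus formulas. I would check this by confirming degeneracy cases: when $x_0 \to 0$ (inversion centered at the center of symmetry) the image circle should be concentric with radius $\lambda^2/(R\cos\alpha)$, which the formula gives; and when $x_0^2 = R^2\cos^2\alpha$ (inversion center on the contact-point circle) the radius blows up and $\C$ degenerates to a line, which should coincide with the parabola case of \cref{prop:ssf} — a consistency check worth stating. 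Beyond that, one should note the sign of $R^2\cos^2\alpha - x_0^2$ distinguishes the ellipse/hyperbola regimes, which is why the radius carries an absolute value while the center does not.
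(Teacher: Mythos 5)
Your proof is correct and follows the route the paper itself sets up: Section 2 already identifies $\C$ as the inversive image of the regular polygon's incircle (radius $R\cos\alpha$, centered at the origin) and establishes via the pedal remark that the sides of every $\P$ are tangent to it, so the proposition reduces to the standard circle-inversion formula, which you apply correctly and which reproduces both $I$ and $r$ exactly (including the unsimplified factor of $2$ in the printed center).
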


\begin{proposition}
Let Brocard inellipse (caustic of the pedal family with respect to $I$) be centered at $O'$ with semi-axes $a',b'$. These are given by:
{\scriptsize
\begin{align*}
    O'&=\left[x_0 +\frac{ \lambda^2 x_0(R^2\cos ^2\alpha \cos(2\alpha) - x_0^2)}{ R^2(R^2 - 4x_0^2)\cos ^4\alpha + 2R^2x_02\cos ^2\alpha  + x_0^4},0\right]\\
    a'&=\frac{\lambda^2 R(x_0^2-R^2\cos^2\alpha     )\cos^2\alpha }{R^2(R^2- 4  x_0^2) \cos ^4 \alpha   + 2 R^2 x_0^2\cos ^2\alpha   + x_0^4}\\
    b'&=\frac{\lambda ^2R\cos ^2\alpha }{\sqrt{R^2(R^2- 4  x_0^2) \cos ^4 \alpha + 2 R^2x_0^2\cos ^2\alpha  + x_0^4}}\\
\end{align*}
}
\end{proposition}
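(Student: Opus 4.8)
The plan is to compute the Brocard inellipse $\B$ directly as the envelope of the harmonic family $\H$, which by the Remark preceding \cref{thm:sum_tang_n} is the $I$-pedal family of $\P$. I would proceed in coordinates anchored to the regular configuration of \cref{fig:intro-n5}(left): place the regular $N$-gon (with $\alpha=\pi/N$) so that its circumcircle is centered at the origin with radius $R$, and let the inversion circle be centered at $\mathrm{inv}=(x_0,0)$ with radius $\lambda$, exactly as in the Appendix setup. Applying the inversion $z\mapsto \mathrm{inv}+\lambda^2(z-\mathrm{inv})/|z-\mathrm{inv}|^2$ to the vertices of the regular polygon gives a rational parametrization of the vertices $P_i$ of $\P$; the contact points $P_i'$ (vertices of $\H$) are likewise the inversive images of the midpoints of the regular polygon's sides, which lie on a circle of radius $R\cos\alpha$. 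Thus $\H$ has vertices on the image circle of that circle under the inversion, and one first recovers the preceding two propositions (the caustic $\C=(I,r)$ of $\P$, and then notes $\E$'s data) as a warm-up and consistency check, since they share all the same intermediate quantities.

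The core computation is then: parametrize the circle carrying the vertices of $\H$ by an angle $t$, take three (or, cleanly, all $N$, but three suffice to pin down a conic) consecutive vertices separated by $2\alpha$, form the side-lines of $\H$, and compute the envelope of this one-parameter family of lines as $t$ varies. Equivalently — and I expect this to be the most efficient route — I would use the known fact that $\B$ is the Brocard inellipse of each harmonic polygon $\H$, so for $N=3$ it is the ellipse inscribed in $\triangle P_1'P_2'P_3'$ tangent to the sides at their "symmedian feet"; but to get a formula uniform in $N$ and genuinely as the Poncelet caustic, the envelope-of-side-lines approach is safer. Writing a side-line of $\H$ as $L_t(x,y)=0$ with coefficients rational in $\cos t,\sin t$ (hence, after Weierstrass substitution, rational in a single parameter), the envelope is obtained from $L_t=0,\ \partial_t L_t=0$; eliminating $t$ yields the quadratic form of $\B$. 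Its center $O'$ is the solution of the linear system given by the gradient of that quadratic form, and $a',b'$ come from the eigenvalues of the associated matrix (equivalently, from the semi-axes read off after translating to $O'$). By the left–right symmetry of the whole configuration about the $x$-axis, $O'$ lies on the $x$-axis and the axes of $\B$ are horizontal and vertical, which is what makes the stated closed forms for $O'_x,a',b'$ possible; I would invoke this symmetry explicitly to halve the work.

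The main obstacle is purely the algebraic bulk: the side-line coefficients involve $R,\lambda,x_0,\cos\alpha,\sin\alpha$ together with $\cos t,\sin t$, and the elimination of $t$ followed by extraction of center and semi-axes produces large rational expressions whose denominators must be recognized as the common factor $R^2(R^2-4x_0^2)\cos^4\alpha+2R^2x_0^2\cos^2\alpha+x_0^4$ appearing in all three formulas. I would manage this by (i) exploiting the $x$-axis symmetry to kill the $xy$ cross-term and the linear $y$-term from the start, reducing the conic to three unknowns; (ii) computing with $c=\cos\alpha$ as the only trigonometric symbol (using $\sin^2\alpha=1-c^2$, and noting $\sin(2\alpha),\cos(2\alpha)$ enter only through $c$), so that everything is polynomial in $R,\lambda,x_0,c$; and (iii) verifying the final expressions against the already-established caustic data and against a direct numerical instance (e.g. the values in \cref{fig:intro-n5}), and against the degenerate/parabolic limit where, by the preceding results, $I$ lands on $\S$ and $\B$ must specialize consistently. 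A useful cross-check is that the relation $(C_1-O)=(3/2)(C_2-O)$ and the known centroid-locus conics should be compatible with the $O'$ here; but the decisive validation is simply that the envelope conditions $L_t=\partial_tL_t=0$ reproduce the displayed $O',a',b'$ after simplification.
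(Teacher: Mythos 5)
The paper gives no proof of this proposition at all: the appendix results are stated as bare closed forms (evidently obtained by symbolic computation), so there is no argument to compare yours against. Your plan — push the regular configuration through the inversion, use the mirror symmetry about the $x$-axis to force $O'$ onto that axis and the conic's axes onto the coordinate axes, compute the caustic as the envelope $L_t=\partial_tL_t=0$ of the side-lines of $\H$, and read off $O',a',b'$ from the resulting quadratic form — is the natural and correct route, and your sanity checks (consistency with the caustic $\C=(I,r)$, the parabolic limit, numerics) are the right ones. Two points need tightening. First, the parametrization: the vertices of $\H$ are \emph{not} separated by the constant angle $2\alpha$ on the circle $\C$ that carries them; the equal spacing lives on the pre-image circle of radius $R\cos\alpha$ in the regular configuration, so the one-parameter family of side-lines must be generated by rotating the regular pre-image by $t$ and then applying the inversion $z\mapsto \mathrm{inv}+\lambda^2(\bar z-\overline{\mathrm{inv}})^{-1}$, not by marching in equal steps around $\C$ itself. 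As worded, your sentence conflates the two circles; taken literally it would produce the wrong family. Second, "three side-lines suffice to pin down a conic" is not right (five tangent lines determine a conic), but this is harmless since the envelope computation with a single side-line family $L_t$ already yields the caustic. With those corrections the proposal is a sound derivation scheme, though of course the proposition's entire content is the explicit formulas, so nothing is actually proved until the elimination and simplification are carried out; a quick check such as $x_0=0$ giving $|a'|=b'=\lambda^2/R$ (the incircle of the regular harmonic polygon of circumradius $\lambda^2/(R\cos\alpha)$) confirms the target expressions are the ones your computation should land on.
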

\begin{proposition} The loci of $X_2$, $X_3$ and $X_4$ are the conics given by: {\scriptsize 
\begin{align*}
   X_2: & (R^2 - 4 x_0^2) ( R^4 - 56R^2x_0^2 + 16x_0^4) x^2 + (R^2 - 4 x_0^2)^3   y^2\\
    &- 2 x_0 (R^6 - 60 R^4 x_0^2 + 12 R^4 \lambda^2 + 240 R^2 x_0^4 - 192 R^2 x_0^2 \lambda^2 - 64 x_0^6 + 64 x_0^4 \lambda^2) x \\
   &+   x_0^2 ( R^6 - 60 R^4 x_0^2 + 24 R^4 \lambda^2 + 240 R^2  x_0^4 - 384 R^2  x_0^2 \lambda^2 + 144 R^2 \lambda^4 - 64  x_0^6 + 128  x_0^4 \lambda^2 - 64  x_0^2 \lambda^4)=0 \\
   X_3: & ( R^4 - 56R^2x_0^2 + 16x_0^4) (R^2 - 4x_0^2)^4 x^2 + (R^2 - 4x_0^2)^2(R^4 + 40 R^2  x_0^2 + 16  x_0^4)^2 y^2 \\
    & -2 x_0 (R^2 - 4x_0^2) (R^{10} - 68 R^8  x_0^2 + 28 R^8 \lambda^2 + 736 R^6  x_0^4 - 928 R^6  x_0^2 \lambda^2 - 2944 R^4  x_0^6 - 768 R^4  x_0^4 \lambda^2 \\
    &+ 4352 R^2  x_0^8 - 2560 R^2  x_0^6 \lambda^2 - 1024  x_0^{10} + 1024  x_0^8 \lambda^2) x \\
    &+   x_0^2(R^{12} + (-72  x_0^2 + 56 \lambda^2) R^{10} + (1008  x_0^4 - 2080  x_0^2 \lambda^2 + 784 \lambda^4) R^8 - 256  x_0^2 (23  x_0^4 - 23  x_0^2 \lambda^2 \\
    &+ 16 \lambda^4) R^6  
    + 256  x_0^4 (63  x_0^4 + 4  x_0^2 \lambda^2 - 18 \lambda^4) R^4 - 2048  x_0^6 (x_0^2 - \lambda^2) (9  x_0^2 - 2 \lambda^2) R^2 + 4096  x_0^8 (x_0^2 - \lambda^2)^2 ) =0\\
    X_4: & (R^2 - 4x_0^2)^4 x^2 +   (R^4 - 56R^2x_0^2 + 16x_0^4)(R^2 - 4x_0^2)^2 y^2\\
   &x_0(-2 R^8 + 32 R^6 x_0^2+ 40 R^6   \lambda^2 - 192 R^4 x_0^4 + 96 R^4 x_0^2 \lambda^2 + 512 R^2 x_0^6 - 1152 R^2 x_0^4 \lambda^2 - 512 x_0^8 + 512 x_0^6 \lambda^2) x \\
   &+x_0^2(R^8 - 16 R^6 x_0^2 - 40 R^6 \lambda^2 + 96 R^4 x_0^4 - 96 R^4 x_0^2 \lambda^2 + 400 R^4 \lambda^4 - 256 R^2 x_0^6 + 1152 R^2 x_0^4 \lambda^2\\
  & - 896 R^2 x_0^2 \lambda^4  
     + 256 x_0^8 - 512 x_0^6 \lambda^2 + 256 x_0^4 \lambda^4)=0
\end{align*}
  }

\end{proposition}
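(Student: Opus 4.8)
The plan is to exhibit the family via an explicit rational parametrization and then eliminate the parameter, recovering exactly the displayed equations. I would start from the regular configuration of \cref{fig:intro-n5}: three mutually tangent circles of radius $R\sin\alpha$, $\alpha=\pi/3$, centered at $q_j(t)=R(\cos\theta_j,\sin\theta_j)$ with $\theta_j=t+\tfrac{2\pi}{3}(j-1)$, and invert in the circle of center $e=(x_0,0)$ and radius $\lambda$ as in \cref{app:explicit}. The vertices of $\P$ are the centers of the image circles,
\[
P_j(t)=e+\frac{\lambda^2(q_j-e)}{D_j},\qquad
D_j=|q_j-e|^2-R^2\sin^2\alpha=\tfrac14 R^2+x_0^2-2Rx_0\cos\theta_j .
\]
The key structural observation is that rotating the regular configuration by $2\pi/3$ merely relabels the $q_j$ while fixing $e$ and the inversion circle, so $P_j(t+\tfrac{2\pi}{3})=P_{j+1}(t)$; hence every triangle center obeys $X_k(t+\tfrac{2\pi}{3})=X_k(t)$ and is a rational function of $T:=\cos 3t$ and $S:=\sin 3t$ alone, while the reflection $t\mapsto-t$ forces each locus to be symmetric about the $x$-axis, explaining the absence of $xy$- and $y$-terms in the displayed equations.

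Next I would compute $X_2$ and $X_3$ and get $X_4$ for free from the Euler relation $X_4=3X_2-2X_3$. For $X_2=\tfrac13\sum_j P_j=e+\tfrac{\lambda^2}{3}\sum_j (q_j-e)/D_j$, bringing the sum over the common denominator $D_1D_2D_3$ and repeatedly using the third-roots-of-unity identities $\sum_j\cos\theta_j=\sum_j\sin\theta_j=0$, $\sum_j\cos^2\theta_j=\tfrac32$, $\sum_j\cos^3\theta_j=\tfrac34\cos3t$, $\prod_j\cos\theta_j=\tfrac14\cos3t$ (and their sine analogues) collapses numerator and denominator to \emph{affine-linear} forms in $T,S$; one finds $D_1D_2D_3=A_0-A_1T$ and $X_2=\bigl(x_0+\tfrac{\lambda^2}{3}\cdot\tfrac{B_0+B_1T}{A_0-A_1T},\ \tfrac{\lambda^2}{3}\cdot\tfrac{B_2S}{A_0-A_1T}\bigr)$ with constants $A_i,B_i$ in $R,x_0,\lambda$. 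For $X_3$ I would take the two perpendicular-bisector conditions, which after clearing the $D$'s read $2(X_3-e)\cdot(D_iv_j-D_jv_i)D_iD_j=\lambda^2(D_i-D_j)\bigl[(D_i+D_j)(v_i\!\cdot\! v_j)-D_iD_j\bigr]$ with $v_j=q_j-e$, solve by Cramer's rule (the coefficient determinant factors as $4D_1^3D_2D_3\,\Delta$, $\Delta=\sum_j D_j\det(v_{j+1},v_{j+2})$), and reduce again by the same symmetric-sum identities; the $2\pi/3$-periodicity then forces $X_3-e$, and so $X_4$, into the same shape: each coordinate is a ratio of an affine-linear form in $(T,S)$ over one common affine-linear form in $(T,S)$.

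The conclusion is then formal: if $x=(a_0+a_1T+a_2S)/(c_0+c_1T+c_2S)$ and $y=(b_0+b_1T+b_2S)/(c_0+c_1T+c_2S)$ share a denominator, then $[x:y:1]$ is a fixed projective-linear image of $[T:S:1]$, so as $(T,S)$ traverses the circle $T^2+S^2=1$ the point $(x,y)$ traverses a conic; its equation is obtained by solving the $x$-relation for $T$ as a Möbius function of $x$, the $y$-relation for $S$, and substituting into $T^2+S^2=1$. Performing this for $X_2,X_3,X_4$ produces degree-two curves whose coefficients, once factored, are the displayed formulas (as a check, in the $X_2$ equation the $y^2$- to $x^2$-coefficient ratio comes out to $(R^2-4x_0^2)^2/(R^4-56R^2x_0^2+16x_0^4)$, matching the display). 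Qualitatively, the $X_2$-locus is already known to be a conic by \cite{sergei2016-com} (it is the vertex centroid), and $X_2,X_3,X_4$ tracing conics is predicted by the classification of \cite{helman2021-theory}; the computation above upgrades these to the explicit equations.

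The main obstacle is purely computational: the raw Cramer expression for $X_3$ is bulky, and the substance of the proof is precisely that the threefold symmetry makes nearly everything cancel, leaving the linear-over-linear form — this reduction, together with the final matching of coefficients, is best carried out with a computer algebra system. A secondary point needing care is the hyperbola and parabola regime, where the circular caustic $\C$ acts as an excircle of $\triangle P_1P_2P_3$ rather than its incircle and some vertices switch branches; the algebraic identities, hence the conic equations, persist there by analytic continuation in $(x_0,\lambda)$.
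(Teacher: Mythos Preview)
The paper does not actually supply a proof of this proposition: it sits in \cref{app:explicit} as a bare statement of CAS-derived formulas, with no argument attached. So there is no ``paper's own proof'' to compare against; what you have written is in effect a proof where the paper offers none.

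Your strategy is sound and is essentially how one would organize such a computation. The parametrization of the vertices as $P_j(t)=e+\lambda^2(q_j-e)/D_j$ with $D_j=\tfrac14R^2+x_0^2-2Rx_0\cos\theta_j$ is correct (note $\cos^2\alpha=\tfrac14$ for $\alpha=\pi/3$), and your use of the $2\pi/3$-shift symmetry to reduce everything to rational functions of $(T,S)=(\cos3t,\sin3t)$, together with the $t\mapsto-t$ reflection to kill $xy$- and $y$-terms, is exactly the right structural observation. Your check that $D_1D_2D_3$ is affine in $T$ follows from the elementary symmetric functions of $\cos\theta_j$, and the same identities show the centroid numerator is affine in $(T,S)$, so the linear-over-linear form for $X_2$ is genuinely established. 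Getting $X_4$ from the Euler relation $X_4=3X_2-2X_3$ is a nice economy.

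The one place where your write-up is a sketch rather than a proof is the $X_3$ step: you set up the Cramer system correctly, but the assertion that after the symmetric-sum reductions the circumcenter also collapses to a ratio of affine-linear forms in $(T,S)$ over a \emph{common} affine-linear denominator is the whole content, and you have not exhibited it --- you (reasonably) defer to a CAS. That is fine and honest, but be aware that a priori the circumcenter could be degree-two-over-degree-two in $(T,S)$ and still be $2\pi/3$-periodic; the drop to degree one is a genuine cancellation that has to be checked, not just invoked from periodicity. Once that is verified, your elimination argument (projective-linear image of the circle $T^2+S^2=1$) cleanly yields a conic, and matching coefficients against the displayed polynomials is then routine.
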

\begin{proposition} The locus of $X_6$  is the ellipse given by:
{\scriptsize 
\begin{align*}
X_6:&  (R^2 - 4 x_0^2)  (R^8 + 544 R^4 x_0^4 + 256 x_0^8)  (R^2 + 4  x_0^2)^2  x^2+ (R^4 + 16  x_0^4)^2  (R^2 - 4  x_0^2)^3  y^2\\
&- 2  x_0  (R^2 + 4  x_0^2)^2  (R^{10} - 4  R^8    \lambda^2 - 4  R^8  x_0^2 - 160  R^6    \lambda^2  x_0^2 + 544  R^6  x_0^4 \\
&+ 1536  R^4    \lambda^2  x_0^4 - 2176  R^4  x_0^6  
 - 512  R^2    \lambda^2  x_0^6 + 256  R^2  x_0^8 + 1024    \lambda^2  x_0^8 - 1024  x_0^{10})  x\\
&+ x_0^2  (R^2 + 4  x_0^2)^2  ( R^{10}   ( 8  \lambda^2 + 4 x_0^2) R^8 + (16  \lambda^4 - 320  \lambda^2 x_0^2 + 544 x_0^4) R^6 - 64 x_0^2 (15  \lambda^4 \\
&- 48  \lambda^2 x_0^2 
 + 34 x_0^4) R^4 + 256 x_0^4 ( \lambda^2 - x_0^2)   (3  \lambda^2 - x_0^2) R^2 - 1024 x_0^6 ( \lambda^2 - x_0^2)^2 )=0
\end{align*}
  }

\end{proposition}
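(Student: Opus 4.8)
The plan is to verify the displayed equation by an explicit parameterization of the Poncelet family, followed by elimination, reusing the data already computed above. Specializing $\alpha=\pi/3$, I would place $\E$ in the normalized position dictated by its focus/vertex coordinates --- centered on the $x$-axis with axes along the coordinate axes --- and take $\C=(I,r)$ to be the caustic of the preceding proposition, with $I$ on the $x$-axis. Since the family is Ponceletian (it is inscribed in $\E$ and circumscribes $\C$, by the corollary above), I would write the moving vertex as a point $A=A(u)$ running over $\E$; the two tangent lines from $A$ to $\C$ meet $\E$ again at the remaining vertices $B(u)$ and $C(u)$, and by Poncelet closure the third side $B(u)C(u)$ is automatically tangent to $\C$, so $\triangle A(u)B(u)C(u)$ sweeps the whole family. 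To keep the computation finite I would not resolve $B$ and $C$ individually but keep them as the root pair of the quadratic cut out on $\E$ by a tangent line from $A$, carrying them only through $B+C$ and $BC$, and I would use the Weierstrass substitution so that every quantity in sight is rational in the single parameter $u$ with coefficients polynomial in $R,x_0,\lambda$.

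Next, with $a(u)^2=|B-C|^2$, $b(u)^2=|C-A|^2$, $c(u)^2=|A-B|^2$, I would assemble the symmedian point $X_6(u)=(a^2A+b^2B+c^2C)/(a^2+b^2+c^2)=\big(x(u),y(u)\big)$, substitute $\big(x(u),y(u)\big)$ into the claimed quadratic $Q(x,y;R,x_0,\lambda)$ --- the left-hand side of the displayed equation --- and check that its numerator vanishes identically in $u$. This is a polynomial identity over $\mathbb{Q}[R,x_0,\lambda]$, which I would certify by evaluating at enough integer specializations and invoking a degree bound rather than by full symbolic expansion. Because $u\mapsto X_6(u)$ is a non-constant rational map, the identity forces the locus to be Zariski-dense in the irreducible conic $\{Q=0\}$, hence to equal it. Finally, to recognize $\{Q=0\}$ as an ellipse I would note that the coefficients of $x^2$ and $y^2$ in $Q$ are each $(R^2-4x_0^2)$ times a manifestly positive factor, so they have the same sign (outside the degenerate locus $R^2=4x_0^2$, where $\C$ itself degenerates); a real conic $\alpha x^2+\beta y^2+\gamma x+\delta=0$ with $\alpha\beta>0$ and infinitely many real points --- which $\{Q=0\}$ has, carrying the locus of a genuine family --- is an ellipse.

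The substance of the proof is exactly that last vanishing: for a generic Poncelet family the $X_6$-locus is not a conic, so what must be shown is that the higher-degree part of the eliminant drops out, which the identity check certifies. I expect the main obstacle to be expression swell in forming $X_6(u)$ --- the weighted combination $a^2A+b^2B+c^2C$ built from the unresolved pair $\{B,C\}$, together with clearing denominators, produces sizeable rational functions, and $Q(x(u),y(u))$ has high degree in $u$; this is controlled by keeping only the Vieta data $B+C,BC$, clearing denominators once, and carrying out the final check numerically at many specializations of $(R,x_0,\lambda,u)$. Along the way, the conserved sum $\sum_i\tan(\theta_i/2)=r/\rho$ of \cref{thm:sum_tang_n} (see \cref{eq:soma_tg}) should come out independent of $u$, validating the parameterization, and the reflective symmetry of the configuration about the $x$-axis is visible in the absence of $xy$- and $y$-terms in $Q$. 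An alternative route that leans on the paper's conservation theme: since $\C$ is fixed and $\tau=\sum_i\tan(\theta_i/2)$ is conserved, the tangent lengths $s-a,s-b,s-c$ are the three roots of $z^3-s\,(z^2-r\tau z+r^2)=0$ with the semiperimeter $s$ as the single parameter, which pins down the triangle's shape at once; one then still needs the inscribed-in-$\E$ condition to fix the rotation as a function of $s$ before reading off $X_6$ and eliminating $s$.
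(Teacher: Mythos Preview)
The paper does not actually supply a proof of this proposition: it sits in the appendix of explicit formulas, all of which are stated without argument and were evidently obtained by computer-algebra elimination. So there is nothing in the paper to compare against beyond the implicit ``parameterize and eliminate'' that produced the displayed equation in the first place.

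Your plan is exactly that implicit computation made explicit, and it is sound. A few remarks. First, your decision to carry $B,C$ only through their elementary symmetric functions is justified because $X_6=(a^2A+b^2B+c^2C)/(a^2+b^2+c^2)$ is symmetric under $B\leftrightarrow C$: indeed $a^2=|B-C|^2$ is symmetric, and $b^2B+c^2C=|C-A|^2B+|A-B|^2C$ is as well. Second, the identity-testing step (specialize $(R,x_0,\lambda,u)$ at enough integer points and invoke a degree bound) is a legitimate certificate provided you record the bound; the honest alternative is a single symbolic normal-form check in a CAS. Third, your ellipse argument is correct: the $x^2$-coefficient is $(R^2-4x_0^2)$ times the positive factor $(R^8+544R^4x_0^4+256x_0^8)(R^2+4x_0^2)^2$, while the $y^2$-coefficient is $(R^2-4x_0^2)$ times the positive factor $(R^4+16x_0^4)^2(R^2-4x_0^2)^2$, so the two share a sign off the degenerate locus $R^2=4x_0^2$; combined with the existence of infinitely many real points on the locus, this forces an ellipse. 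Your closing alternative via the cubic in the tangent lengths is a nice structural observation but, as you note, still needs the inscribed-in-$\E$ constraint to finish, so it does not shortcut the elimination.
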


\begin{proposition} When the center of inversion is internal to the Soddy circle, $S_{reg}$ the locus $X_{15}$ is the circle given by:
{\scriptsize 
\begin{align*}
    X_{15}:&\left(x - \frac{x_0(-256 x_0^6 + 144 R^2 x_0^4 -24 ( R^4 + 6 R^2 \lambda^2) x_0^2 + R^6 + 12 R^4 \lambda^2)  }{R^6 - 24 R^4 x_0^2 + 144 R^2 x_0^4 - 256 x_0^6}\right)^2 + y^2\\
    &=\frac{ 36864 R^2 x_0^8 \lambda^4}{(R^6 - 24 R^4 x_0^2 + 144 R^2 x_0^4 - 256 x_0^6)^2}
\end{align*}
}
\end{proposition}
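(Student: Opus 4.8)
The plan is to make the Steiner--Soddy family completely explicit in the rotation angle $t$ of the underlying regular configuration and then reduce the statement to a polynomial identity. First I would write down the vertices of $\P$: starting from three mutually tangent circles of radius $R\sin\alpha$ (here $\alpha=\pi/3$, so this radius is $R\sqrt3/2$) centered at $v_k(t)=R(\cos(t+2k\pi/3),\sin(t+2k\pi/3))$ for $k=0,1,2$, and applying the inversion of center $\mathrm{inv}=(x_0,0)$ and radius $\lambda$, the classical formula for the inversive image of a circle gives the $k$-th vertex $P_k$ of $\P$ as $\mathrm{inv}+\lambda^2\bigl(v_k(t)-\mathrm{inv}\bigr)\big/\bigl(R^2\cos^2\alpha-2Rx_0\cos(t+2k\pi/3)+x_0^2\bigr)$. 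These are rational functions of $\cos t,\sin t$ whose denominators are linear in the cosine, so the squared sidelengths $a^2(t),b^2(t),c^2(t)$ of $\triangle ABC$, and all the ``rational'' triangle centers built from them (e.g.\ $X_2,X_3,X_4,X_6$, whose companion formulas in \cref{app:explicit} are obtained the same way), are rational in $\cos t,\sin t$ with no surds; for those centers one simply eliminates $t$ and checks the implicitized curve has degree $2$.

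The center $X_{15}$, however, genuinely carries a square root (of the area and of $3$), so it needs one extra step. I would use the characterization of the isodynamic points $X_{15},X_{16}$ as the two points $P$ from which the inversive image of $\{A,B,C\}$ is an equilateral triple, equivalently the common solutions of $a^2|P-A|^2=b^2|P-B|^2=c^2|P-C|^2$, equivalently the two common points of two Apollonius circles of $\triangle ABC$. Each of $a^2|P-A|^2=b^2|P-B|^2$ and $b^2|P-B|^2=c^2|P-C|^2$ defines a circle with coefficients rational in $t$; their radical axis is a line $\ell(t)$ with rational coefficients, and substituting a linear parametrization of $\ell(t)$ into one of these circles yields a quadratic $q_t$ whose two roots are exactly $X_{15}(t)$ and $X_{16}(t)$. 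Hence the elementary symmetric functions of the pair $\{X_{15}(t),X_{16}(t)\}$ are rational in $t$ (and in $R,x_0,\lambda$), although each point separately involves $\sqrt{\operatorname{disc} q_t}$.

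Let $\mathcal{K}$ be the circle in the statement and $g(x,y)=x^2+y^2-2x_c x+(x_c^2-r_c^2)$ its defining polynomial, with $x_c$ and $r_c^2$ as given; note the denominator there factors as $R^6-24R^4x_0^2+144R^2x_0^4-256x_0^6=(R^2-16x_0^2)(R^2-4x_0^2)^2$. Because $\mathcal{K}$ is centered on the $x$-axis and the whole construction is equivariant under the reflection $t\mapsto-t$ across that axis --- which fixes $\mathrm{inv}$ and interchanges the two isodynamic points as a set --- it suffices to show that \emph{both} $X_{15}(t)$ and $X_{16}(t)$ lie on $\mathcal{K}$. Equivalently, the restriction of $g$ to $\ell(t)$ is a quadratic proportional to $q_t$ (equivalently still, $g(X_{15})+g(X_{16})\equiv 0$ and $g(X_{15})^2+g(X_{16})^2\equiv 0$, whose left-hand sides are symmetric in the conjugate pair, hence rational in $t$). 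After clearing denominators this is a short list of polynomial identities in $\cos t,\sin t,R,x_0,\lambda$ modulo $\cos^2t+\sin^2t-1$ --- or, after the Weierstrass substitution $u=\tan(t/2)$, in $u,R,x_0,\lambda$ --- which I would verify with a computer algebra system. Finally, the hypothesis that $\mathrm{inv}$ is interior to $\S_{reg}$ forces (via \cref{prop:ssf}) the family to be a genuine ellipse-inscribed Poncelet family, so $X_{15}(t)$ is non-constant; being contained in the irreducible conic $\mathcal{K}$ it fills out $\mathcal{K}$ up to at most finitely many points, as claimed.

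The step I expect to be the real obstacle is purely computational: degrees grow quickly along the chain (inversion images $\to$ squared sidelengths $\to$ Apollonius circles $\to$ radical axis $\to$ substitution into $g$), so the final proportionality check deals with sizeable polynomials in $u$ and the three parameters. The remainder is conceptually routine; the care lies in arranging the elimination so the CAS closes the identity, and in the branch bookkeeping that matches the particular isodynamic point named $X_{15}$ to the stated circle.
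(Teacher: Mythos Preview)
The paper states this proposition in an appendix of explicit formulas and gives no proof; the implicit justification is a direct CAS verification along the lines you sketch, so your overall plan (explicit parametrization by the rotation angle, inversion formula for the vertices, reduction to a polynomial identity) matches the paper's spirit. There is, however, a genuine gap at the step where you replace ``$X_{15}(t)\in\mathcal{K}$'' by ``both $X_{15}(t)$ and $X_{16}(t)$ lie on $\mathcal{K}$''. The reflection $t\mapsto -t$ does \emph{not} interchange the two isodynamic points: $X_{15}$ has trilinears $\sin(A+\pi/3):\sin(B+\pi/3):\sin(C+\pi/3)$, an expression intrinsic to the congruence class of the triangle, so reflecting the plane sends $X_{15}$ of the triangle at parameter $t$ to $X_{15}$ (not $X_{16}$) of the triangle at parameter $-t$. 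Hence there is no reason for $X_{16}$ to lie on the \emph{same} circle $\mathcal{K}$, and indeed the paper lists $X_{15}$ and $X_{16}$ separately among the circular loci while giving this formula only for $X_{15}$. Your proposed checks $g(X_{15})+g(X_{16})\equiv 0$ and $g(X_{15})^2+g(X_{16})^2\equiv 0$ (equivalently, that $g|_{\ell(t)}$ is proportional to $q_t$) would therefore simply fail when you run them.

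The repair is small. Keep the single surd explicit: with $X_{15}(t)=P(t)+Q(t)\sqrt{D(t)}$ and $P,Q,D$ rational in $\cos t,\sin t$, one has $g(X_{15}(t))=A(t)+B(t)\sqrt{D(t)}$ with $A,B$ rational, and the claim becomes the two \emph{separate} rational identities $A\equiv 0$ and $B\equiv 0$, which the CAS can close directly after the Weierstrass substitution. Alternatively, eliminate $t$ from the conjugate pair $\{X_{15}(t),X_{16}(t)\}$ by resultants to get a quartic implicit curve, factor it over $\mathbb{Q}(R,x_0,\lambda)$ into two circles, and identify the $X_{15}$ factor by evaluating at one convenient $t$. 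Either route avoids the unjustified assumption that the two isodynamic loci coincide.
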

\begin{proposition}
The locus of $X_{20}$ is a segment  $[X^-_{20},X^+_{20}]$ contained in axis $x$ of length $L_{20}$. These are given by:
{\scriptsize 
\begin{align*}
    X^+_{20} &= \frac{x_0\left(\zeta + 4 x_0 R^5 + 304 R^3 x_0 \lambda^2 - 64 x_0^3 (x_0^2  - \lambda^2) ( R + x_0) \right)}{ (R^2 + 8 R x_0 + 4 x_0^2) (R - 2 x_0)^3 (R + 2 x_0)} \\
X^-_{20}&= \frac{x_0(\zeta - 4 x_0 R^5 - 304 R^3 x_0 \lambda^2 + 64 x_0^3 (x_0^2  - \lambda^2) (R - x_0) )  }{(R^2 - 8 R x_0 + 4 x_0^2) (R + 2 x_0)^3 (R - 2 x_0)}\\
L_{20}&=\frac{18432  x_0^4  R^5  \lambda^2}{(R^2 - 4  x_0^2)^3  (R^4 - 56  R^2  x_0^2 + 16  x_0^4)}
\end{align*}
where $\zeta=R^6 + (76 \lambda^2 - 28 x_0^2) R^4 +16 x_0^2 (12 \lambda^2 + 7 x_0^2) R^2$.
}

\end{proposition}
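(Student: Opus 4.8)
The plan is computational and rests on the parametrization already used above to obtain the loci of $X_2$, $X_3$ and $X_4$. Put the configuration in coordinates so that the common axis of symmetry of the porism is the $x$-axis; this is legitimate since, by the preceding propositions, $\E$ has its foci and vertex on the $x$-axis and the caustic is a circle $\C=(I,r)$ with $I=(x_I,0)$, so $\E$, $\C$, and the loci of all the centers involved are symmetric about it. Parametrize the Steiner--Soddy triangles by one real parameter $t$: for instance let $\ell(t)$ be the tangent to $\C$ at the point $I+r(\cos t,\sin t)$; it meets $\E$ in two of the vertices, and by the $N=3$ Poncelet closure for the pair $(\E,\C)$ — valid because the family is Ponceletian — the third vertex, recovered as the second intersection with $\E$ of the remaining two tangents, closes up. This yields vertices $A(t),B(t),C(t)$ as explicit algebraic functions of $t$, hence explicit rational expressions for the circumcenter $X_3(t)$ and the orthocenter $X_4(t)$; the de Longchamps point is then $X_{20}(t)=2X_3(t)-X_4(t)$, the reflection of the orthocenter in the circumcenter.

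The first assertion — that the whole locus lies on the $x$-axis — is, via the Euler-line identities $X_4=3X_2-2X_3$ and $X_{20}=2X_3-X_4=4X_3-3X_2$, equivalent to the single scalar identity
\[
y_{X_4}(t)=2\,y_{X_3}(t)
\]
for all $t$ (equivalently $4\,y_{X_3}(t)=3\,y_{X_2}(t)$): over the family the orthocenter is always exactly twice as far from the axis as the circumcenter, and on the same side. First I would substitute the parametrized vertices into the standard formulas for $X_3$ and $X_4$ and reduce $y_{X_4}(t)-2\,y_{X_3}(t)$ modulo the algebraic relation encoding the incircle/Poncelet constraint on $A(t),B(t),C(t)$; the claim is that it collapses to $0$. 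Alternatively, since the loci of $X_2$ and $X_3$ are the explicit conics displayed above, one may verify $4\,y_{X_3}(t)=3\,y_{X_2}(t)$ directly from those equations and the shared parametrization. I expect this identity to be the main obstacle: $X_3$ and $X_4$ are degree-heavy symmetric functions of the three vertices and the cancellation only materialises once the Poncelet constraint is used, so a computer-algebra verification (resultants, or a Gröbner basis in the elementary symmetric functions of the vertices) is the realistic route.

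With the locus confined to the $x$-axis we have $X_{20}(t)=(g(t),0)$ for a rational function $g$, and its image over the admissible range of $t$ (topologically a circle) is a closed interval $[X^-_{20},X^+_{20}]$ whose endpoints occur at critical points of $g$. Since $g$ is the $x$-coordinate of a triangle center, it is invariant under the reflection involution $t\mapsto\bar t$ of the family, so the two fixed triangles of that involution — the triangles symmetric about the $x$-axis, each having one vertex at a vertex of $\E$ — are critical points of $g$; I would then confirm (for instance by rewriting $g$ as a rational function of a coordinate invariant under the involution and checking monotonicity on the resulting interval, or by counting the real critical points) that these two symmetric triangles realise the extreme values. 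This is plausible from the stated answer itself: the denominators $(R^2\pm 8Rx_0+4x_0^2)(R\mp 2x_0)^3(R\pm 2x_0)$ of $X^{\pm}_{20}$ are exactly what one expects from evaluating $X_{20}$ at the two ends separately. Evaluating $X_{20}$ at a symmetric triangle is easy — its two off-axis vertices are the roots of a quadratic — and substituting $\alpha=\pi/3$ together with the appendix's formulas for the foci/vertex of $\E$ and for $(I,r)$ produces $X^{+}_{20}$ and $X^{-}_{20}$ in the stated form; subtracting gives
\[
L_{20}=|X^+_{20}-X^-_{20}|=\frac{18432\,x_0^4\,R^5\,\lambda^2}{(R^2-4x_0^2)^3\,(R^4-56R^2x_0^2+16x_0^4)},
\]
with the expected poles at $R^2=4x_0^2$ (the parabolic degeneration, where the segment becomes unbounded) and at $R^4-56R^2x_0^2+16x_0^4=0$. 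Everything else — extracting the explicit semi-axes of the loci, carrying out the extremization, and simplifying — is routine bookkeeping with quantities already in hand.
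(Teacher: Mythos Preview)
The paper does not actually supply a proof of this proposition: it sits in the appendix of explicit formulas, all of which are stated bare, presumably as the output of a computer-algebra derivation from the parametrization set up at the start of the appendix. So there is no ``paper's own proof'' to compare against beyond an implicit CAS computation.

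Your plan is a sound route to an honest proof and goes further than the paper does. The reduction of ``$X_{20}$ stays on the axis'' to the scalar identity $y_{X_4}(t)=2\,y_{X_3}(t)$ (equivalently $4\,y_{X_3}(t)=3\,y_{X_2}(t)$) via $X_{20}=2X_3-X_4$ is correct and is exactly the kind of statement a CAS can certify once the Poncelet constraint is imposed; you are right that this is where the nontrivial cancellation happens. The one place I would tighten is the extremization step: invariance under the axial reflection does force the two symmetric triangles to be critical points of $g$, but it does not by itself rule out additional critical points, so your parenthetical ``checking monotonicity on the resulting interval'' or ``counting the real critical points'' is not optional --- it is the actual content of identifying $X_{20}^{\pm}$ with the values at the two isosceles configurations. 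Once that is done, your endpoint evaluation and the subtraction giving $L_{20}$ are straightforward, and your reading of the pole structure (parabolic degeneration at $R^2=4x_0^2$, the quartic factor from the $X_3$/$X_4$ conics) matches the formulas.
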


\bibliographystyle{splncs04}
\bibliography{999_refs,999_refs_rgk}

\end{document}